\newcommand{\bC}{{\mathbb C}}
\newcommand{\bZ}{{\mathbb Z}}
\newcommand{\End}{\operatorname{End}}
\newcommand{\fH}{\mathcal H}
\newcommand{\fA}{\mathfrak A}
\newcommand{\fgl}{\mathfrak {gl} }
\begin{document}

\theoremstyle{definition}
\theoremstyle{remark}
\newtheorem{definition}{Definition}[section]
\newtheorem{remark}{Remark}[section]
\newtheorem{example}{Example}[section]
\newtheorem*{notation}{Notation}
%\numberwithin{equation}{section}

\theoremstyle{rem}
\numberwithin{equation}{section}

\newtheorem{theorem}{Theorem}[section]
\newtheorem{corollary}[theorem]{Corollary}%[section]
\newtheorem{lemma}[theorem]{Lemma}%[section]%
\newtheorem{proposition}[theorem]{Proposition}%[section]
\newtheorem*{theorem*}{Theorem}

\author{Alexei Borodin}
\address{Department of Mathematics, Massachusetts Institute of Technology, 77 Massachusetts ave. Cambridge, MA 02139, USA}
\email{borodin@math.mit.edu }

\author{Natasha Rozhkovskaya}
\address{Department of Mathematics, Kansas State University, Manhattan, KS 66502, USA}
\email{rozhkovs@math.ksu.edu}

\keywords{}         %
%\newpage
%\msc{,,,,}    %<-------------------
\thanks{}
\subjclass[2010]{Primary 17B35, % universal  super 
 Secondary  17B10, 20C30} % reps of super, reps of finite symmetric

\begin{abstract}
Two super-analogs of the Schur\,-\,Weyl duality are considered: the duality of actions in $(\bC^{m|n})^{\otimes N}$ of the Lie superalgebra $\fgl(m,n)$ and the symmetric group $S_N$, and the duality of actions  of the Lie superalgebra $Q(n)$ and  a certain  finite group $Se(N)$ 
in $(\bC^{n|n})^{\otimes N}$. We construct an isomorphism  of symmetric and universal enveloping algebras of these Lie superalgebras called special symmetrization. Using this isomorphism of vector spaces we describe explicitly  the duality between the centers of the corresponding  universal enveloping algebras and the group algebras. 
\end{abstract}

\title {On a super\,-\,analog of the Schur\,-\,Weyl Duality}

\maketitle

\setcounter{section}{-1}

\section{Introduction}\label{Sec-0}

The results of the present note were obtained in 1995 and written up as a preprint of the 
Erwin Sch\"{o}dinger Institute in Vienna. They were inspired by the work of G.I.\,Olshanski \cite{Olsh-1}
on the so-called \emph{special symmetrization}. Further results on special symmetrization
were obtained in \cite{OkOl-1} and \cite{Olsh-2}; however, the case of Lie superalgebras that we dealt with seems to have so far 
remained uncovered by the existing literature. For that reason, and also because of the continuing interest of researchers to the structure 
 of centers of  universal enveloping algebras of Lie superalgebras (see, e.g., 
\cite{AlSaSa, Cheng-Wang, Naz, SaSa, SaSaSe}), we have decided to make our work publicly available. 
The text below is a lightly edited version of the 1995 preprint.

Let $U(\fgl(m,n))$ be the universal enveloping algebra of the Lie superalgebra $\fgl(m,n)$, and let $\bC[S_N]$ be the group algebra of the symmetric group $S_N$. These two associative superalgebras  (all the elements of the second one are even) act on the tensor space
 $(\bC^{m|n})^{\otimes N}$. For  $U(\fgl(m,n))$  it is the $N$-th tensor power of the vector representation, and  $\bC[S_N]$   permutes the components $(\bC^{m|n})$ of the tensor product. It was proved in \cite{Serg-1},
see also \cite{Ber-Reg} in the case of $\mathfrak{gl}(m,n)$,
that these two actions are dual in the following sense: the commutant of the image of $U(\fgl(m,n))$ in $\End (\bC^{m|n})^{\otimes N} $ coincides with the image of $\bC[S_N]$,  and {\it vice  versa}. This duality  is a super-analog of the well-known  Schur\,-\,Weyl duality  for  $\fgl(n)$ and $S_N$ actions on  $(\bC^n)^{\otimes N}$.
 
 The matrix Lie superalgebra 
 \begin{align}
 	Q(n)=
	\left\{
	 	\begin{pmatrix}
	 	A& B\\
	 	B&A
	 	\end{pmatrix} \vert\, A,B \in \text{Mat}\, (n,\bC)
	\right\}\subset \fgl(n,n)
 \end{align}
  has an irreducible vector representation. A dual group  for  the  action of  this Lie superalgebra  in  $(\bC^{n|n})^{\otimes N}$
   is a double covering of the wreath product of  $\bZ_2^N$ and $S_N$ with the natural action of $S_N$ on  $\bZ_2^N$
  \cite{Serg-1}. We refer to this group as the {\it Sergeev group} and denote by $Se(N)$ (see a  precise definition  in  Section \ref {Sec-2}). 
We refer to \cite{Cheng-Wang} for a  detailed exposition on the  Schur-Sergeev duality.
  
  The above dualities  imply  that the centers of $U(\fgl(m,n))$ and $\bC[S_N]$, denoted  by $Z(\fgl(m,n))$ and $Z(S_N)$ respectively,  as well as the centers of $U(Q(n))$ and $\bC[Se(N)]$, denoted  by $Z(Q(n))$ and $Z(Se(N))$ respectively, have the same images in the automorphisms of the corresponding tensor spaces.    The problem is to find explicitly an element of $Z(\fgl(m,n))$ (or  $Z(Q(n))$)  that acts on   a tensor space identically with a given central element from the group algebra  of the dual group. This problem was solved in \cite{Ker-Olsh} for the classical Schur\,-\,Weyl duality. The result has important applications  to the theory of symmetric functions and to the classification of  characters  of the infinite symmetric  group. The key to solving the  ``classical'' problem   is in introducing a suitable map called {\it special symmetrization}  that establishes an isomorphism  of $\fgl(n)$-modules $S(\fgl(n))$  (symmetric algebra of the vector space $\fgl(n)$) and 
    $U(\fgl(n))$. The special symmetrization  differs from the usual symmetrization,  and  for the first time it appeared in   \cite{Olsh-1}
    in the study of   classical infinite-dimensional  Lie groups  and  corresponding  analogues of universal  enveloping algebras. Generalizations of  special symmetrization and its versions for classical Lie algebras  were constructed in \cite{OkOl-1, Olsh-2}.
    
    In this note we  define the special symmetrization in the super-case  for $\fgl(m,n)$ and $Q(n)$. Moreover, using this map we define elements of $Z(\fgl(m,n))$ and $Z(Q(n))$ that coincide in $\End (\bC^{m|n})^{\otimes N} $  with the image of natural basis elements 
  of $Z(S_N)$ and $Z(Se(N))$. Our main results are Theorems \ref{thm_2.4} and \ref{thm_2.5} below.

 \subsection*{Acknowledgments}
 The authors are grateful to the Erwin Shr\"odinger Institute in Vienna for the perfect conditions for finishing this work. The work was supported in part by the Soros International Science Foundation. The authors  are very grateful to G.\,I.\,Olshanski for the setting of the problem and for constant attention to the work. 
 
   %%%%%%%%%%%
   \section{Special Symmetrization} \label{Sec-1}
   Let $S(\fgl(m,n))$  denote the symmetric algebra, let  $T(\fgl(m,n))$ be the tensor algebra, and  let  $U(\fgl(m,n))$ be the universal enveloping algebra of the Lie superalgebra $\fgl(m,n)$. Define a linear map  $S(\fgl(m,n))\otimes \fgl(m,n)\, \to \, S(\fgl(m,n))$, sending $A\otimes X$ to an element $A*X\in S(\fgl(m,n)) $ as follows. For homogeneous elements  $Y_1, \dots, Y_k$ and $X$  we set
   \begin{align}\label{eq_1.1}
   	(Y_1\dots Y_k) * X= 	Y_1\dots Y_k X +\sum_{r=1}^{k} (-1)^{P(r)} Y_1\dots Y_{r-1}\langle XY_r\rangle Y_{r+1}\dots Y_k.
   \end{align}
 Here $\langle XY\rangle $ denotes the product in the associative matrix algebra and 
 \[
 P(r)=\sum_{s=r+1}^{k} p(Y_s)\, p(X),
 \]
 where $p$ is the parity  function on the super space $\fgl(m,n)$.
 
 Consider the linear map
 \begin{align}\label{eq_1.2}
 \begin{split}
 	 T(\fgl(m,n))\, &\to \, S(\fgl(m,n)),\\
	 X_1\otimes \dots \otimes X_k \, & \mapsto\,  ((\dots (X_1* X_2)* \dots ) *X_{k-1})*X_k.	 
\end{split}	 
 \end{align}
 \begin{lemma}\label{lem_1.1}
 The map (\ref{eq_1.2}) factors through to  a  linear map $\tilde \sigma: U(\fgl(m,n))\to S(\fgl(m,n))$.
 \end{lemma}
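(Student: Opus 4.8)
The plan is to show that the map \eqref{eq_1.2} kills the two-sided ideal $J \subset T(\fgl(m,n))$ generated by the elements
\[
X \otimes Y - (-1)^{p(X)p(Y)} Y \otimes X - [X,Y], \qquad X,Y \in \fgl(m,n),
\]
so that it descends to $U(\fgl(m,n))$; the universal property of $U$ then produces the desired $\tilde\sigma$. Since the image lies in the commutative (super)symmetric algebra $S(\fgl(m,n))$, and the $*$-operation is built so that $A * X = AX + (\text{lower-degree correction})$, one expects the leading term of \eqref{eq_1.2} on a word $X_1 \otimes \cdots \otimes X_k$ to be the supersymmetrized product $X_1 \cdots X_k \in S^k$, with correction terms of degree $< k$ coming from the $\langle X Y_r\rangle$ brackets.

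First I would record two structural facts about the operation \eqref{eq_1.1}. (i) \textbf{Associativity-type / locality property:} for a fixed $A = Y_1 \cdots Y_k$, the map $X \mapsto A * X$ is linear in $X$, and more importantly iterated $*$-products behave predictably under reordering of the $Y_i$ — concretely, I would verify that $*$ makes $S(\fgl(m,n))$ into a module over a suitable algebra, or at least that $(A * X) * Y$ and $(A * Y) * X$ differ in a controlled way. (ii) \textbf{The key identity:} for all $X, Y \in \fgl(m,n)$ and any monomial $A \in S(\fgl(m,n))$,
\[
(A * X) * Y - (-1)^{p(X)p(Y)} (A * Y) * X = A * [X,Y].
\]
Here $[X,Y] = \langle XY \rangle - (-1)^{p(X)p(Y)}\langle YX\rangle$ is the superbracket, and $A * [X,Y]$ means extending $*$ linearly in the second slot. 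This identity is exactly the statement that \eqref{eq_1.2} annihilates the generators of $J$ after right-multiplication by an arbitrary word, and — granting that \eqref{eq_1.2} applied to $X_1 \otimes \cdots \otimes X_k$ equals $\bigl(((X_1 * X_2) * \cdots) * X_k\bigr)$ regardless of bracketing in a way compatible with the ideal — it will give the factorization.

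The computation of (ii) is the heart of the matter. Writing $A = Y_1 \cdots Y_k$ and expanding $(A * X) * Y$ via \eqref{eq_1.1} twice, one gets four types of terms: (a) $A X Y$; (b) terms where $Y$ hits one of the $Y_r$ (from the outer $*$); (c) terms where $X$ hits one of the $Y_r$ (from the inner $*$), then $Y$ is appended; (d) the term where $Y$ hits the newly-inserted factor $\langle X Y_r \rangle$ — wait, more carefully, where $Y$ hits the $X$-factor, producing $\langle \langle X Y\rangle\rangle$-type contributions, i.e. $\langle XY\rangle$ inserted. Subtracting the $X \leftrightarrow Y$ swapped expression (with the sign $(-1)^{p(X)p(Y)}$), the terms of type (a) cancel, the symmetric parts of (b) and (c) cancel against each other because in $S(\fgl(m,n))$ the factors $Y_r$ commute up to sign and can be moved, and what survives is precisely the terms where the \emph{same} $Y_r$ is hit, leaving $\langle XY\rangle - (-1)^{p(X)p(Y)}\langle YX\rangle = [X,Y]$ inserted in position $r$, summed over $r$, plus the trailing $A[X,Y]$ term — which together assemble into $A * [X,Y]$. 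Keeping the parity signs $P(r)$ straight through this cancellation is the main obstacle; the bookkeeping must use that in $S(\fgl(m,n))$ one has $Y_r Y_s = (-1)^{p(Y_r)p(Y_s)} Y_s Y_r$, and that $[X,Y]$ has parity $p(X)+p(Y) \pmod 2$, so the exponents $P(r)$ before and after the swap match up correctly. Once (ii) is established for monomials and extended by linearity, Lemma \ref{lem_1.1} follows from the universal property of $U(\fgl(m,n))$.
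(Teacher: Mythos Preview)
Your approach is correct and is essentially the paper's own: the paper's proof consists of the single observation that one must check that the images of $A\otimes(X\otimes Y-(-1)^{p(X)p(Y)}Y\otimes X)\otimes B$ and $A\otimes[X,Y]\otimes B$ agree, which is ``verified by a direct calculation using the identity $\langle XY\rangle-(-1)^{p(X)p(Y)}\langle YX\rangle=[X,Y]$.'' Your key identity (ii) is precisely that direct calculation (for $B$ empty), and the general case then follows because each subsequent operation $-*B_i$ is linear in its left argument.
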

 \begin{proof}
 We have to check  that for any monomials $A,B$ and  homogeneous  elements $X,Y\in \fgl(m,n)$ the image of $A\otimes (X\otimes Y-
 (-1)^{p(X)p(Y)}Y\otimes X)\otimes B$ is equal to the image of $A\otimes [X,Y]\otimes B$. This is verified by a direct calculation using the identity $\langle X Y\rangle - (-1)^{p(X)p(Y)}\langle YX\rangle =[X,Y]$.
  \end{proof}
 \begin{proposition}\label{prop_1.2}
 The  map $\tilde \sigma$ from Lemma \ref{lem_1.1} is invertible. 
 \end{proposition}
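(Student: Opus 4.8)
The plan is to prove invertibility by a filtered--graded argument, showing that $\tilde\sigma$ differs from ``the identity'' only by strictly lower-order corrections. The starting point is an observation read off directly from \eqref{eq_1.1}: the $*$-product is a perturbation of ordinary multiplication in $S(\fgl(m,n))$, since the term $Y_1\cdots Y_k X$ is a monomial of degree $k+1$, whereas each correction term $Y_1\cdots Y_{r-1}\langle XY_r\rangle Y_{r+1}\cdots Y_k$ has degree $k$ (two tensor factors $X$ and $Y_r$ have been replaced by the single element $\langle XY_r\rangle\in\fgl(m,n)$), and the signs $(-1)^{P(r)}$ are irrelevant here. Iterating this through \eqref{eq_1.2}, one sees that a decomposable tensor $X_1\otimes\cdots\otimes X_k$ is sent to the monomial $X_1 X_2\cdots X_k$ in $S(\fgl(m,n))$ plus a linear combination of monomials of degree $\le k-1$: indeed, any summand that ever picks up a correction loses at least one degree that the subsequent $*$-operations cannot recover.

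Next I would equip $S(\fgl(m,n))$ with its filtration by total degree, $S_{\le k}=\bigoplus_{j\le k}S^j(\fgl(m,n))$, and $U(\fgl(m,n))$ with its standard PBW filtration, $U_{\le k}$ being spanned by products of at most $k$ elements of $\fgl(m,n)$. By \lemref{lem_1.1} the map $\tilde\sigma$ is well defined on $U(\fgl(m,n))$, and by the first paragraph $\tilde\sigma(U_{\le k})\subseteq S_{\le k}$, so $\tilde\sigma$ is a morphism of filtered vector spaces. Passing to associated graded spaces and using the Poincar\'e--Birkhoff--Witt theorem for Lie superalgebras, one has a canonical identification $\operatorname{gr}U(\fgl(m,n))\cong S(\fgl(m,n))$ under which the symbol of a product $X_1\cdots X_k$ corresponds to the monomial $X_1\cdots X_k\in S^k(\fgl(m,n))$, while $\operatorname{gr}S(\fgl(m,n))=S(\fgl(m,n))$ tautologically. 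With these identifications the formula of the first paragraph gives $\operatorname{gr}\tilde\sigma=\mathrm{id}_{S(\fgl(m,n))}$. Since both filtrations are exhaustive and bounded below, a filtered linear map whose associated graded is bijective is itself bijective, which proves the proposition.

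The only genuine content is the degree bookkeeping of the first paragraph --- that the leading term of \eqref{eq_1.2} on $X_1\otimes\cdots\otimes X_k$ is exactly $X_1\cdots X_k$ and that every other summand drops the degree by at least one --- which is an easy induction on $k$ from \eqref{eq_1.1}, together with the invocation of the super PBW theorem, standard for $\fgl(m,n)$; the rest is the routine filtered-algebra fact recalled above. An alternative that avoids naming PBW is to build $\tilde\sigma^{-1}$ directly by induction on degree, using the same triangularity $\tilde\sigma(X_1\cdots X_k)=X_1\cdots X_k+(\text{terms of degree}<k)$ to solve for the inverse term by term; this is the same argument repackaged.
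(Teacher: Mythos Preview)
Your proof is correct and follows essentially the same route as the paper: both observe that $\tilde\sigma$ sends a degree-$k$ monomial to itself plus terms of strictly lower degree, and deduce invertibility from this triangularity with respect to the degree filtration. The paper works directly with the matrix-unit basis and phrases it as ``triangular matrix with ones on the diagonal'' on each $U_k$, which is exactly your ``alternative'' formulation at the end; your main version just repackages this as $\operatorname{gr}\tilde\sigma=\mathrm{id}$ via PBW.
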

 \begin{proof}
 Let $\{E_{ij}\}$ be the standard matrix units, and   let $U_k$ (or $V_k$) be a linear span of monomials $\{ E_{i_1j_1}\dots E_{i_mj_m}, m\le k\}$ in $U(\fgl (m, n))$ (or  in $S(\fgl(m,n))$). From (\ref{eq_1.1}) and (\ref{eq_1.2})  we obtain 
 \[
 \tilde \sigma (E_{i_1,j_1}\dots E_{i_m j_m}) = E_{i_1j_1}\dots E_{i_m j_m} +A,\quad A\in V_{m-1}.
 \]
 Hence $\tilde \sigma(U_k)\subseteq V_k$, and in the {basis} $\{E_{i_1j_1}\dots E_{i_mj_m}, m\le k\}$ the map $\tilde \sigma\vert _{U_k}$ is represented by a  triangular matrix with ones on the main diagonal. This means that  for every $k=0,1,\dots, $ the map $\tilde \sigma\vert _{U_k}$ is invertible, and therefore $\tilde \sigma$ is invertible on the whole space $U(\fgl(m,n))$.
 \end{proof}
 
 Thus, $\tilde \sigma$ defines a linear isomorphism $\sigma =\tilde \sigma ^{-1}: S(\fgl(m,n))\to U(\fgl(m,n))$. This  isomorphism  can be regarded as an analog of the special symmetrization  introduced in \cite{Ker-Olsh}, see also \cite{Olsh-1}.
  
  We shall obtain an explicit formula for the map $\tilde \sigma$. 
  The image of a monomial $E_{i_1 j_1}\dots E_{i_Mj_M}\in U(\fgl(m,n))$ under $\tilde \sigma$  is equal to the sum of all monomials in $S(\fgl(m,n))$ that can be obtained as follows. Fix a partition of the set $\{1,\dots, M\}$ into disjoint subsets. Let us group matrix units corresponding to each subset together  and multiply  matrices  in each subset. The result of application of $\tilde \sigma$ is a sum  of  basis  monomials in $S(\fgl(m,n))$, each  supplied with a coefficient $\pm 1$.
  
  To formalize the above description we need some notation. 
  Let $\alpha$ be a partition  of the set $\{1,\dots, M\}$ into  subsets (blocks):
  \[
  	\alpha= 
	 \{ m_1\dots  l_1\} \dots
	 \{ m_r\dots l_r\}.
  \]
 A partition $\alpha $ is called {\it regular}  if elements in every block   are ordered as 
 $m_q<\dots<l_q,\, q=1,\dots, r$, and if  the blocks are listed in the ascending order of their first elements:
 $m_1<\dots <m_s<\dots <m_r,\, 1<\dots <s<\dots <r$.
 
 Consider a regular partition $\alpha= \{ m_1\dots l_1\} \dots \{ m_r\dots l_r\}$. For $t=1,\dots, r$, set 
 \begin{align*}
 	\delta_t(\alpha)=
		\begin{cases}
		\prod_{k=1}^{d_t-1}\delta_{j_{s_k}}  ^{i_{s_{k+1}}}, & \text{if}\quad d_t>1,\\
		1, & \text{if}\quad d_t=1,
		\end{cases}
 \end{align*}
 where $\delta^i_j$ is the Kronecker delta,  $\{s_1=m_t \dots  s_{d_t}=l_t\}$ is the block number $t$ in the partition $\alpha$,  and $d_t$ is the length of this block. Set
  \begin{align*}
 	\delta(\alpha)=\prod_{t=1}^{r}\delta _t(\alpha).
  \end{align*}
   Introduce  functions  $\theta_k(s), \eta_k(s)$, $k=1,\dots, r$, such that  for $s\in\{1,\dots, M\}$ we have
 \begin{align*}
 	\theta_k(s)&=
		\begin{cases} \max\{u<s,\, u\in \{m_k,\dots, l_k\}\},
		 & \text{if the  $k$-th block contains a number less than $s$};\\
		0, & \text{otherwise;}
		\end{cases}
\\
 	\eta_k(s) &=
		\begin{cases}
		m_k, & \text{if}\quad m_k<s;\\
		0, & \text{otherwise.}
		\end{cases}
 \end{align*}
 With $p$ being the parity function on $\fgl(m,n)$, we now set $p_{ij}=p(E_{ij})$ and  assume that $p_{i_mj_k}=0$ for $k=0$ or $m=0$. Define
  \begin{align*}
  	q_s(\alpha)&= p_{i_sj_s}\sum_{k>t} p_{i_{\eta_k(s)} j_{\theta_k(s)}},\quad \text{for}\quad s\in\{m_t,\dots, l_t\},\\
  	q(\alpha)&=\sum_{s=1}^{M}q_s(\alpha).
  \end{align*}
  \begin{proposition}\label{prop_1.3}
   Let $E_{i_1, j_1}\dots E_{i_Mj_M}$ be a basis element of $U(\fgl(m,n))$. Then 
  \begin{align}\label{eq_1.3}
  	\tilde \sigma(E_{i_1, j_1}\dots E_{i_Mj_M})=\sum_{\alpha}(-1)^{q(\alpha)}\delta(\alpha) E_{i_{m_1}j_{l_1}}\dots E_{i_{m_r}j_{l_r}},
  \end{align}
  where the summation is taken over the set of all regular partitions  $\alpha= \{ m_1\dots l_1\} \dots \{ m_r\dots l_r\}$ of
   $\{1,\dots, M\}$.
  \end{proposition}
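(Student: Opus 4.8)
The plan is to induct on $M$, using the recursive definition (\ref{eq_1.2}): for $M\ge 2$ one has
\[
\tilde\sigma(E_{i_1 j_1}\dots E_{i_M j_M})=\tilde\sigma(E_{i_1 j_1}\dots E_{i_{M-1} j_{M-1}})*E_{i_M j_M},
\]
so a single application of the operation $*\,E_{i_M j_M}$ from (\ref{eq_1.1}) should carry the formula from length $M-1$ to length $M$. The base case $M=1$ is immediate: the unique regular partition of $\{1\}$ is the one-block partition $\{1\}$, with $\delta=1$ and $q=0$, giving $\tilde\sigma(E_{i_1 j_1})=E_{i_1 j_1}$.

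For the inductive step I would apply the hypothesis to $E_{i_1 j_1}\dots E_{i_{M-1}j_{M-1}}$ and write each resulting summand in $S(\fgl(m,n))$ as a monomial $Z_1\dots Z_{r'}$, where $Z_u=E_{i_{m_u}j_{l_u}}$ is the matrix unit attached to the $u$-th block of a regular partition $\alpha'$ of $\{1,\dots,M-1\}$. Expanding $(Z_1\dots Z_{r'})*E_{i_M j_M}$ by (\ref{eq_1.1}) produces a ``leading'' term $Z_1\dots Z_{r'}E_{i_M j_M}$ together with $r'$ ``merge'' terms, the $t$-th of which absorbs $E_{i_M j_M}$ into block $t$ via the associative product and carries the Koszul sign $(-1)^{P(t)}$ with $P(t)=\sum_{u>t}p(Z_u)\,p(E_{i_M j_M})$. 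The combinatorial heart of the argument is the bijection between regular partitions $\alpha$ of $\{1,\dots,M\}$ and pairs consisting of a regular partition $\alpha'$ of $\{1,\dots,M-1\}$ together with a choice of either a new last singleton block or an existing block $t$ of $\alpha'$ --- the bijection being to delete the maximal element $M$ from its block in $\alpha$. Because $M$ is maximal, this deletion never changes the smallest element of a surviving block, hence preserves both regularity and the block numbering; under it the leading term matches the summand of (\ref{eq_1.3}) indexed by $\alpha=\alpha'\cup\{\{M\}\}$, and the $t$-th merge term matches the summand indexed by the partition obtained from $\alpha'$ by appending $M$ to block $t$.

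It then remains to match the scalars and the signs under this correspondence. For the Kronecker-delta factors: absorbing $E_{i_M j_M}$ into the partial product $\delta_t(\alpha')E_{i_{m_t}j_{l_t}}$ of block $t$ introduces exactly one new contraction of running indices, which is precisely the extra factor distinguishing $\delta_t(\alpha)$ from $\delta_t(\alpha')$ once $M$ has become the new last element of that block; the other blocks' $\delta$'s, and in the leading term the entire $\delta$, are unchanged (a singleton contributes $1$). For the signs I would show $q(\alpha)=q(\alpha')$ in the new-block case and $q(\alpha)=q(\alpha')+P(t)$ in the merge case. Maximality of $M$ is what makes this work: for every block $k>t$ one has $\eta_k(M)=m_k$ and $\theta_k(M)=l_k$, so $q_M(\alpha)=p_{i_M j_M}\sum_{k>t}p_{i_{m_k}j_{l_k}}$, which is exactly $P(t)$ (and vanishes when $M$ forms a new last block, since the sum is then empty; one also checks using the convention $p_{i_0 j_\cdot}=0$ that the new block contributes nothing to $q_s$ for $s<M$); meanwhile appending $M$ to block $t$ alters neither $\theta_k(s)$ nor $\eta_k(s)$ for any $k$ and any $s<M$ (because $M>s$), so $q_s(\alpha)=q_s(\alpha')$ for all $s<M$. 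Summing, $q(\alpha)=\sum_{s<M}q_s(\alpha)+q_M(\alpha)=q(\alpha')+q_M(\alpha)$, which closes the induction.

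I expect the sign bookkeeping of the last step to be the main obstacle: one must carefully unwind the definitions of $\theta_k$, $\eta_k$ and $q_s$ and verify that the single Koszul sign produced by one application of (\ref{eq_1.1}) is accounted for by precisely the single new summand $q_M(\alpha)$, with no previously present summand of $q$ disturbed. This is the place where one uses in an essential way both that $M$ is the largest index and that the ordering of the factors $Z_u$ in the symmetric algebra coincides with the ordering of the blocks of a regular partition by their smallest elements.
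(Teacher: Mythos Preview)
Your proposal is correct and follows essentially the same approach as the paper's own proof: induction on $M$, the bijection between regular partitions of $\{1,\dots,M\}$ and those of $\{1,\dots,M-1\}$ obtained by deleting $M$ from its block, and the verification that the Kronecker factors and the sign $q(\alpha)$ update exactly by the extra $\delta_{j_{l_t}}^{i_M}$ and by the Koszul sign $P(t)=q_M(\alpha)$ produced by one application of~(\ref{eq_1.1}). The paper carries out the same computation (with $M\to M+1$ and base case $M=2$), recording the identities~(\ref{eq_1.5}) and~(\ref{eq_1.6}); your write-up is in fact slightly more explicit about why $q_s(\alpha)=q_s(\alpha')$ for $s<M$.
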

  \begin{proof}
  Induction on $M$.
  \begin{enumerate}
  	\item  For $M=2$
 \begin{align*}
 	 \tilde \sigma(E_{i_1 j_1}E_{i_2j_2})=E_{i_1, j_1}*E_{i_2j_2} = E_{i_1, j_1}E_{i_2j_2} +\delta _{j_1}^{i_2}E_{i_1 j_2}.
  \end{align*}
  	\item By the  inductive hypothesis we have 
\begin{align*}
  	\tilde \sigma(E_{i_1 j_1}\dots E_{i_{M}j_{M} }) =\sum_\beta (-1)^{q(\beta)} \delta(\beta)E_{i_{m_1} j_{l_1} }\dots E_{i_{m_r}j_{l_r}},
\end{align*}	
where $\beta=\{m_1\dots l_1\}\dots \{ m_r\dots l_r\}$ is a regular partition  of $\{1,\dots, M\}$. Then
\begin{align}\label{eq_1.4}
\begin{split}
  	\tilde \sigma &(E_{i_1 j_1}\dots E_{i_{M+1}j_{M+1}})
      				=\tilde \sigma(E_{i_1 j_1}\dots E_{i_{M}j_{M}}) * E_{i_{M+1}j_{M+1}}
	\\
	&=\sum_{\beta}(-1)^{q(\beta)}\delta(\beta) E_{i_{m_1}j_{l_1}}\dots E_{i_{m_r}j_{l_r}} E_{i_{M+1}j_{M+1}} 
	\\
	&\quad +\sum_{\beta}\sum_{t=1}^r (-1)^{q(\beta)+\tilde p (t)}\delta(\beta)\,\delta_{j_{l_t}}^{i_{M+1}}\, 
	E_{i_{m_1}j_{l_1}}\dots E_{i_{m_t}j_{M+1}} E_{i_{m_r}j_{l_r}},
\end{split}	
 \end{align}
with    $\tilde p(t)=\sum_{k=t+1}^{r} p_{i_{m_k}j_{l_k}}p_{i_{M+1}j_{M+1}}$.

     Every regular partition of the set $\{1,\dots, M+1\}$  can be obtained from a regular partition of the set $\{1,\dots, M\}$ 
     by inserting $M+1$ at the end of one of the blocks.  Thus, (\ref{eq_1.4})  is a sum of monomials $E_{i_{m_1^\prime} \,j_{l_1^\prime}}\dots E_{i_{m^\prime_{r^\prime}}\,j_{_{l^\prime_{r^\prime}}}}$  taken with some coefficients,  where the summation is over all regular  partitions 
     $\{m^\prime_1 \dots l^\prime_1\}\dots \{m^\prime_{r^\prime}\dots l^\prime_{r^\prime}\}$ of the set  $\{1,\dots, M+1\}$.
     
     Let us compute the coefficients of the monomials. For 
 \[
     	\alpha=\{m_1\dots l_1\}\dots \{m_r\dots l_r\}\{M+1\}
	 \]
     and 
 \[
     	\beta=\{m_1\dots l_1\}\dots \{m_r\dots l_r\}
	\]
     we have 
 \begin{align}\label{eq_1.5}
 \begin{split}
     	q(\alpha)&=q_{M+1}(\alpha)+\sum_{k=1}^{M}q_k(\alpha)=q(\beta),\\
     	\delta(\alpha)&=\delta_{r+1}(\alpha) \prod_{k=1}^r\delta_k(\alpha)=\delta_{r+1}(\alpha)\delta(\beta)=\delta(\beta).
\end{split}	
\end{align}
     For 
 \[
     	\alpha=\{m_1\dots l_1\}\dots \{m_t\dots l_t, M+1\} \dots \{m_r\dots l_r\}\
	 \]
      and 
 \[
     	\beta=\{m_1\dots l_1\}\dots \{m_r\dots l_r\}
	\]
     we have 
 \begin{align*}
     	q_{M+1}(\alpha)&=\sum_{k=t+1}^r p_{i_{M+1}j_{M+1}}p_{i_{m_k}j_{l_k}},\\
	q_s(\alpha)&=q_{s}(\beta) \quad \text{  for every $s\in \{1,\dots, M\}$.}
 \end{align*}
     Therefore, 
 \begin{align}\label{eq_1.6}
 \begin{split}
 	q(\alpha)&= q(\beta) + q_{M+1}(\alpha) = q(\beta)+ \tilde p(t),\\
	  \delta(\alpha)&=\delta^{i_{M+1}}_{j_{l_t}}\delta(\beta).
\end{split}
 \end{align}
 The claim of the  proposition follows from (\ref{eq_1.4}), (\ref{eq_1.5}), and (\ref{eq_1.6}) immediately. 
  \end{enumerate}
  \end{proof}
  
From  Proposition \ref{prop_1.3} it is easy to deduce that $\tilde \sigma $ is an isomorphism not only of vector spaces, but  of $\fgl(m,n)$-modules $U(\fgl(m,n))$ and $S(\fgl(m,n))$.
  
  Let us consider the space of polynomials in $(m+n)^2$ variables $\{x_{ij}\}$. The parity of $x_{ij}$  is set to be equal to  the parity of $E_{ij}$. We denote by $\mathcal D$ the algebra of polynomial differential operators acting  on this space of polynomials. Define a map $\partial$ from 
  $\fgl(m,n)$ to $\mathcal D$  by   $\partial: E_{ij}\mapsto \sum_{k=1}^{m+n} x_{ki}\partial_{kj}$. One can easily check  that the map $\partial$ is a homomorphism of  Lie superalgebras. Therefore, it can be extended to a homomorphism $\partial: U(\fgl(m,n)) \to \mathcal D$. It can be proved that $Ker \,\partial =\{0\}$. The construction is similar to the realization of a universal enveloping algebra by left-invariant differential operators on a Lie group. Thus, 
\begin{align}\label{eq_1.7}
	\partial (E_{i_1j_1}\dots E_{i_lj_l})= \left(  \sum_{k_1}x_{k_1i_1}\partial _{k_1 j_1}\right)\dots  \left( \sum_{k_l}x_{k_li_l}\partial _{k_l j_l}\right).
\end{align}
 Now we shall give a formula for the special symmetrization in terms of differential operators.
  \begin{proposition}\label{prop_1.4}
Let  $\tilde \partial =\partial \circ \sigma$. Then for $E_{i_1j_1}\dots E_{i_Mj_M}\in S(\fgl(m,n))$ we have 
  \begin{align}\label{eq_1.8}
  \tilde \partial (E_{i_1j_1}\dots E_{i_Mj_M})= 
 \sum_{\overline k=(k_1\dots k_{M})}(-1)^{p(\overline k)}x_{k_1 i_1}\dots x_{k_Mi_M}\partial _{k_1j_1}\dots \partial _{k_Mj_M},
  \end{align}
  where $p(\overline k)=\sum_{1\le q< t\le M}p_{k_qj_q}p_{k_ti_t}$.
  \end{proposition}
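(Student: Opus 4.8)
The plan is to compute $\tilde\partial = \partial \circ \sigma$ by instead computing $\partial$ applied to the image of $\tilde\sigma$, exploiting the explicit formula \eqnref{eq_1.3} from \propref{prop_1.3}. Concretely, I would \emph{not} try to invert $\tilde\sigma$; rather I would verify the stated formula \eqnref{eq_1.8} by checking that the operator on the right-hand side, call it $D(E_{i_1j_1}\cdots E_{i_Mj_M})$, satisfies $\partial\bigl(\tilde\sigma(E_{i_1j_1}\cdots E_{i_Mj_M})\bigr) = $ (sum of $D$'s applied to the corresponding symmetric monomials indexed by regular partitions, with the same signs $(-1)^{q(\alpha)}\delta(\alpha)$). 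Since $\sigma = \tilde\sigma^{-1}$ and \eqnref{eq_1.8} is linear, establishing this identity for all basis monomials of $U(\fgl(m,n))$ is equivalent to \eqnref{eq_1.8}. So the real content is the operator identity
\begin{align}\label{eq_plan_1}
\partial(E_{i_1j_1})\cdots\partial(E_{i_Mj_M}) \;=\; \sum_{\alpha}(-1)^{q(\alpha)}\delta(\alpha)\, D\bigl(E_{i_{m_1}j_{l_1}}\cdots E_{i_{m_r}j_{l_r}}\bigr),
\end{align}
where on the left we have the ordered product \eqnref{eq_1.7} and on the right the sum is over regular partitions $\alpha$ of $\{1,\dots,M\}$, and $D$ of a symmetric monomial is defined by the right side of \eqnref{eq_1.8}.

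The mechanism behind \eqnref{eq_plan_1} is normal ordering: expand the left-hand product $\prod_{t=1}^M\bigl(\sum_{k_t} x_{k_t i_t}\partial_{k_t j_t}\bigr)$ and push all the $x$'s to the left past all the $\partial$'s. Each time a $\partial_{k_q j_q}$ (with sign from super-commutation) hits an $x_{k_t i_t}$ with $q<t$, it produces $\pm\delta_{k_t}^{k_q}\delta_{j_q}^{i_t}$; contracting the summation index $k_t=k_q$ is exactly the operation of merging positions $q$ and $t$ into one block and forming the matrix product $E_{i_q j_q}E_{i_t j_t}\rightsquigarrow \delta_{j_q}^{i_t}E_{i_q j_t}$. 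Iterating, a choice of which contractions to perform corresponds precisely to a set partition of $\{1,\dots,M\}$; the regularity conditions on $\alpha$ ($m_q<\dots<l_q$ inside blocks, blocks ordered by first element) arise because the $\partial$'s act to the right, so only later positions get absorbed into earlier ones and the representative of a block is its minimal element. The Kronecker factor $\delta(\alpha)$ is the product of the $\delta_{j_{s_k}}^{i_{s_{k+1}}}$ along each block, matching $\delta_t(\alpha)$, and after all contractions in block $t$ the surviving operator piece is $x_{k i_{m_t}}\partial_{k j_{l_t}}$ summed over $k$, i.e.\ $\partial(E_{i_{m_t}j_{l_t}})$ — except that in $D$ of the symmetric monomial the $x$'s are \emph{all} collected on the left and the $\partial$'s on the right, which is fine because distinct blocks give distinct independent contractions and the leftover $x$'s and $\partial$'s from different blocks only interact through the sign $p(\overline k)$. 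The sign bookkeeping is where I must be careful: every time a $\partial_{k_q j_q}$ moves past $x_{k_t i_t}$ it picks up $(-1)^{p_{k_q j_q} p_{k_t i_t}}$, and after the contraction $k_t = k_q$ this becomes $(-1)^{p_{k_q j_q}p_{k_q i_t}}$; summing these over the appropriate pairs $q<t$ must reproduce exactly $q(\alpha)$ for the merges plus $p(\overline k)$ for the leftover crossings in $D$. The functions $\theta_k$ and $\eta_k$ in the definition of $q_s(\alpha)$ are precisely designed to record ``the most recent previously-absorbed position in block $k$'' and ``the head of block $k$,'' which is what the parity of the relevant $x$ or $\partial$ depends on after contraction.

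I would organize this as an induction on $M$, paralleling the proof of \propref{prop_1.3}: assuming \eqnref{eq_1.8} (equivalently \eqnref{eq_plan_1}) for $M$, multiply $\partial(E_{i_1j_1})\cdots\partial(E_{i_Mj_M})$ on the right by $\partial(E_{i_{M+1}j_{M+1}}) = \sum_{k_{M+1}}x_{k_{M+1}i_{M+1}}\partial_{k_{M+1}j_{M+1}}$, then normal-order this single new factor against the already-normal-ordered expression from the inductive step. The new $\partial_{k_{M+1}j_{M+1}}$ must be moved left past all existing $x_{k_q i_q}$, $q\le M$; for each such $q$ it either passes through (contributing a sign $(-1)^{p_{k_{M+1}j_{M+1}}p_{k_q i_q}}$ and eventually landing $k_{M+1}$ free — this corresponds to $\alpha$ with $M+1$ in a new singleton block, matching \eqnref{eq_1.5}) or it contracts against $x_{k_q i_q}$ producing $\delta_{k_q}^{k_{M+1}}\delta_{j_q}^{i_{M+1}}$ and the sign accumulated from the crossings it already made with $x_{k_{q'} i_{q'}}$, $q<q'\le M$, times whatever future $\partial$'s cross it — this is the case of appending $M+1$ to the block whose last element before $M+1$ is $q$, matching \eqnref{eq_1.6} with the sign $\tilde p(t)$ of \eqnref{eq_1.4}. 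Because the recursion structure is literally identical to that in \propref{prop_1.3} (same bijection ``regular partition of $\{1,\dots,M+1\}$ $\leftrightarrow$ regular partition of $\{1,\dots,M\}$ plus a choice of where to append $M+1$''), the two sets of sign/delta updates match term by term and the induction closes.

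The main obstacle is the sign accounting, specifically verifying that the super-signs generated by the normal-ordering commutations — which involve the parities of index pairs as they appear \emph{during} the process — coincide, after all contractions, with the clean closed-form expression $q(\alpha) + p(\overline k)$ in which parities are evaluated at the \emph{final} contracted indices. One must check that $p_{k_q i_t}$ after the forced identification $k_t = k_q$ really equals $p_{i_{\eta_k(s)} j_{\theta_k(s)}}$ for the relevant $s$, i.e.\ that each index occurring in a leftover $x$ or $\partial$ within a block equals the appropriate entry indexed by the block's head $\eta_k$ or by the predecessor $\theta_k$; this is a bookkeeping identity among the Kronecker deltas collected in $\delta(\alpha)$ and is exactly what makes $q(\alpha)$ well-defined. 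Once one trusts the $M=2$ base case (where \eqnref{eq_1.8} is the single contraction $\partial(E_{i_1j_1})\partial(E_{i_2j_2}) = \sum_{k_1,k_2}(-1)^{p_{k_1j_1}p_{k_2i_2}}x_{k_1i_1}x_{k_2i_2}\partial_{k_1j_1}\partial_{k_2j_2} + \delta_{j_1}^{i_2}\sum_{k}x_{ki_1}\partial_{kj_2}$, which one sees directly from $\partial_{k_1j_1}x_{k_2i_2} = x_{k_2i_2}\partial_{k_1j_1}(-1)^{p_{k_1j_1}p_{k_2i_2}} + \delta_{k_1}^{k_2}\delta_{j_1}^{i_2}$), the inductive step is a mechanical but lengthy matching of the two recursions.
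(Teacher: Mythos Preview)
Your approach is essentially identical to the paper's: both verify the equivalent identity $\tilde\partial\circ\tilde\sigma = \partial$ (your displayed operator identity is exactly the paper's \eqnref{eq_1.9}) by induction on $M$, with the inductive step consisting of multiplying the already normal-ordered expression by $\sum_{k_{M+1}}x_{k_{M+1}i_{M+1}}\partial_{k_{M+1}j_{M+1}}$ on the right and re-normal-ordering, then matching the resulting sign and Kronecker-delta recursion against \eqnref{eq_1.5}--\eqnref{eq_1.6} precisely as the paper does in \eqnref{eq_1.10}--\eqnref{eq_1.12}. One small slip to fix when you write it up: in the inductive step it is the new $x_{k_{M+1}i_{M+1}}$ that must be commuted leftward past the existing $\partial_{k_t j_{l_t}}$'s (of which there are $r$, one per block of $\alpha$, not $M$), rather than the new $\partial$ moving past old $x$'s --- the contraction $\delta_{j_{l_t}}^{i_{M+1}}$ you correctly record is the one produced by that move, so the mathematics goes through once the direction is stated properly.
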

  \begin{proof}
  It suffices to verify the equality $\tilde \partial \circ \tilde \sigma=\partial$ for the map $\tilde \partial$ defined by (\ref{eq_1.8}). We now prove that 
\begin{align}\label{eq_1.9}
	\sum_{\overline k} x_{k_1i_1}\partial_{k_1j_1} \dots x_{k_Mi_M}\partial_{k_Mj_M} = \sum_{\alpha}\sum_{\overline k}
	(-1)^{p_{\overline k}(\alpha)+q(\alpha)}\delta (\alpha) D_{\overline k}(\alpha),
\end{align}
where 
\[
	D_{\overline k}(\alpha) = D_{\overline  k; i_{m_1}\dots i_{m_r} j_{l_1}\dots j_{l_r}} =    x_{k_1i_{m_1}}\dots x_{k_ri_{m_r}}\partial_{k_1j_{l_1}} \dots \partial_{k_rj_{l_r}},
	\]
 $\alpha=\{m_1\dots l_1\}\dots \{m_r\dots l_r\}$ is a regular  partition of the set $\{1,\dots, M\}$,  $ \overline  k=(k_1,\dots, k_r)$, and
\[
	p_{\overline k}(\alpha)=\sum_{1\le q<t\le r} p_{k_{q}j_{l_q}} p_{k_ti_{m_t}}.
	\]
The conclusion of Proposition $\ref{prop_1.4}$   follows from (\ref{eq_1.9}) and Proposition \ref{prop_1.3}. It is easy to check  (\ref{eq_1.9}) for $M=2$, hence
\[
	\partial(E_{i_1j_1}E_{i_2j_2})
	 = \tilde \partial \left(  \delta_{j_1}^{i_2}E_{i_1j_2}+ E_{i_1j_1} E_{i_2j_2}\right) =\tilde \partial \circ \tilde \sigma (E_{i_1j_1}E_{i_2j_2}).
	\]
Using induction on $M$ again and applying $\sum_{\overline k} D_{\overline k}(\alpha)$ to $\sum_{k_{M+1}}x_{k_{M+1}i_{M+1}}\partial_{k_{M+1}j_{M+1}}$ we obtain

\begin{align}
\label{eq_1.10}
\begin{split}
	 \sum_{\overline k} D_{\overline k}(\alpha)\,\sum_{k_{r+1}}x_{k_{r+1}i_{M+1}}\partial_{k_{r+1}j_{M+1}}
	 &=\sum_{\overline k } \sum_{t=1}^r\delta_{j_{l_t}}^{i_{M+1}} (-1)^{p_{\overline k}^\prime} D_{\overline k; i_{m_1}\dots i_{m_r}j_{l_1}\dots j_{l_{t-1}}j_{M+1}j_{l_{t+1}}\dots j_{l_r} }\\
	 &+\sum_{(\overline k, k_{r+1})} (-1)^{p^{\prime\prime}_ {(\overline k, k_{r+1}) }} D_{(\overline k, k_{r+1}); i_{m_1}\dots i_{m_r}i_{M+1}j_{l_1}\dots j_{l_r} j_{M+1}} ,
\end{split}	 
\end{align}
where
\begin{align*}
	p^\prime_{\overline k} =\sum_{s=t+1}^r p_{k_s j_{l_s}} p_{i_{M+1}j_{M+1}},\quad 
	p^{\prime\prime}_{(\overline k, k_{r+1})} =\sum_{t=1}^r p_{k_t j_{l_t}} p_{k_{r+1}i_{M+1}}.
\end{align*}
For the same reason as in the proof of Proposition \ref{prop_1.3} the sum 
\begin{align*}
	\sum_\alpha \sum_{\overline k}(-1)^{p_{\overline k}(\alpha) +q(\alpha)}\delta(\alpha) D_{\overline k}(\alpha) \sum_{k_{M+1}}x_{k_{M+1}i_{M+1}}\partial _{k_{M+1}j_{M+1}}
\end{align*}
is the sum of monomials  $D_{\overline k}(\beta)$ with some  coefficients  over all regular partitions $\beta$ of  $\{1,\dots, M+1\}$.
Note that for 
\begin{align*}
	\beta&=\{m_1\dots l_1\}\dots \{m_r\dots l_r\}\{M+1\},
\\
	\alpha&=\{m_1\dots l_1\}\dots \{m_r\dots l_r\}
\end{align*}
we have 
\begin{align}\label{eq_1.11}
\begin{split}
	\delta(\beta)&=\delta(\alpha), \quad q(\beta)= q(\alpha),\\
	p_{\overline k}(\beta)&=p_{\overline k}(\alpha) +\sum_{s=1}^{r} p_{k_sj_{l_s}}p_{k_{r+1} i_{M+1}}, 
	\end{split}
\end{align}
and for 
\begin{align*}
	\beta&=\{m_1\dots l_1\}\dots \{m_t \dots l_t, M+1\} \{m_r\dots l_r\},
\\
	\alpha&=\{m_1\dots l_1\}\dots \{m_r\dots l_r\}
\end{align*}
we have 
\begin{align} \label{eq_1.12}
\begin{split}
	\delta(\beta)&=\delta^{i_{M+1}}_{j_{l_t}}\delta(\alpha),\\
	q(\beta)&= q(\alpha) + \sum_{s=t+1}^{r}p_{i_{m_s}j_{l_s}}p_{i_{M+1}j_{M+1}} \\
		&= q(\alpha) +  \sum_{s=t+1}^{r}p_{k_si_{m_s}}p_{i_{M+1}j_{M+1}} +
	 \sum_{s=t+1}^{r}p_{k_sj_{l_s}}p_{i_{M+1}j_{M+1}},\\
	 p_{\overline k}(\beta)&=p_{\overline k}(\alpha) +\sum_{s=t+1}^{r}p_{k_si_{m_s}}p_{k_t j_{M+1}}-\sum_{s=t+1}^r p_{k_s i_{m_s}}p_{k_t j_{l_t}}
	 \\
	 &=p_{\overline k}(\alpha) +\sum_{s=t+1}^{r}p_{k_si_{m_s}}p_{ j_{M+1}j_{l_t}}
	 \end{split}
\end{align}
in view of the definition of $q(\alpha)$, $p_{\overline k}(\beta)$ and $\delta(\beta)$. The presence of  the Kronecker delta 
$\delta_{j_{l_t}}^{i_{M+1}}$ in the coefficient of the corresponding  term  implies that either  this coefficient is zero, or
\[
	q(\beta)+p_{\overline k}(\beta)=p_{\overline k}(\alpha)+q(\alpha)+p^\prime_{\overline k}.
\]
  \end{proof}
  Note that  the image of the restriction of   $\tilde \sigma$ to the subalgebra $U(Q(n))$ of $U(\fgl(n,n))$ lies in $S(Q(n))$ due to  (\ref{eq_1.3}) and the fact that $Q(n)$ is the associative sublagebra of $\fgl(n,n)$. Thus, the map $\tilde \sigma$ also establishes the isomorphism  of   vector spaces $U(Q(n))$ and $S(Q(n))$.
  %%%%%%%%%%%
  \section{Duality of Centers}\label{Sec-2}
  Denote by $\fH_k$ the group generated by $a_1,\dots, a_k,\varepsilon$ subject to relations $a^2_1=a^2_2=\dots = a_n^2=\varepsilon$,
  $\varepsilon^2=1$, $a_pa_q=\varepsilon \,  a_qa_p$ for $p,q=1,\dots ,k$, $p\ne q$. Note that $\fH_k$ is a double cover of $\bZ^k_2$. Symmetric group $S_k$ acts on    $\fH_k$  by $\tau a_{p}=a_{\tau(p)}$, $\tau \varepsilon =\varepsilon$.  The {{\it Sergeev group of order $k$}} is the semidirect product of  $\fH_k$ and $S_k$ defined by the above action. 
  Following \cite{Serg-1} we  consider  natural representations of the groups $Se(N)$ and $S_N$ in the tensor space $(\bC^{m|n})^{\otimes N}$. Namely, let $\fA$ be the free associative commutative superalgebra with  homogeneous generators $\{y_i\}$. We introduce  a function $c:\bZ^k_2\times S_k\to  \{\pm 1\}$   by 
  \[
  	 c\, (p(y), \tau) \,y_1\dots y_k= y_{\tau(1)}\dots y_{\tau(k)},
	 \]
where $p(y)=(p(y_1),\dots p(y_n))$ is the vector of parities of $\{y_i\}$. Then for $\tau\in S_N$ we  define its action on $(\bC^{(m|n)})^{\otimes N}$ by 
\[
	 \pi(\tau) (v_1\otimes \dots \otimes v_r)= c\,(\tau^{-1}p(v), \tau) \,v_{\tau^{-1}(1)}\otimes \dots \otimes v_{\tau^{-1}(1)},
	 \]
	 where $v_i\in \bC^{(m|n)}$ are homogeneous elements.
Let 
\begin{align*}
	P=\begin{pmatrix}
	0&1\\
	-1&0
	\end{pmatrix}
	\in \fgl(n,n),
\end{align*}  
and let 
\[
	 \pi(a_k)=1\otimes \dots \otimes P\otimes \dots \otimes 1
	 \]
($P$ is located in the $k$-th place) with  $\pi(\varepsilon)=- \text{Id}$. Then $\pi $ defines  representations of $S_N$   on  $(\bC^{(m|n)})^{\otimes N}$ and $Se(N)$  on  $(\bC^{(n|n)})^{\otimes N}$. Denote  by $\gamma_N$  the vector representations of $\fgl(m,n)$ or $Q(n)$ in the tensor spaces. 

\begin{theorem*}[A.N.\,Sergeev, \cite{Serg-1}]
\begin{align*}
	\pi(S_N)^!&=\gamma_N(U(\fgl(m,n)),\\
	\pi(Se(N))^!&=\gamma_N(U(Q(n))),
\end{align*}
where the symbol $!$ denotes the commutant. 
\end{theorem*}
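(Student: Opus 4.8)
The statement to prove is the double-commutant (Schur--Weyl--Sergeev) duality asserting that $\pi(S_N)^! = \gamma_N(U(\fgl(m,n)))$ inside $\End\big((\bC^{m|n})^{\otimes N}\big)$ and likewise $\pi(Se(N))^! = \gamma_N(U(Q(n)))$ inside $\End\big((\bC^{n|n})^{\otimes N}\big)$. Since this is Sergeev's theorem quoted from \cite{Serg-1}, the plan is to recall the standard argument rather than produce anything new, organized around the super double-centralizer principle.

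First I would establish the easy inclusion in each case: the two actions commute. For the $\fgl(m,n)$--$S_N$ pair, $\gamma_N$ acts diagonally (as $N$-th tensor power of the vector representation, with Koszul signs), and $\pi$ permutes tensor factors with the sign cocycle $c$; a direct check shows $\pi(\tau)\gamma_N(X) = \gamma_N(X)\pi(\tau)$ for all $X\in\fgl(m,n)$, $\tau\in S_N$, so $\gamma_N(U(\fgl(m,n)))\subseteq \pi(S_N)^!$ and dually $\pi(S_N)\subseteq \gamma_N(U(\fgl(m,n)))^!$. For the $Q(n)$--$Se(N)$ pair one checks in addition that $P$ commutes with the image of $Q(n)$ in $\End(\bC^{n|n})$ (equivalently, $\begin{pmatrix}0&1\\-1&0\end{pmatrix}$ supercommutes with $\begin{pmatrix}A&B\\B&A\end{pmatrix}$), which holds by the defining block shape of $Q(n)$, and that the relations of $\fH_N$ are respected by $\pi$ up to the sign $\varepsilon\mapsto -\mathrm{Id}$.

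Next comes the substantive half: the reverse inclusion $\pi(S_N)^!\subseteq\gamma_N(U(\fgl(m,n)))$ (and its $Q(n)$ analog). The cleanest route is the super version of the double-commutant theorem: if $\mathcal{A}\subseteq\End(V)$ is a semisimple (as a superalgebra) subalgebra, then $\mathcal{A}^{!!}=\mathcal{A}$. Apply this with $\mathcal{A}=\gamma_N(U(\fgl(m,n)))$ (resp.\ $\gamma_N(U(Q(n)))$), whose semisimplicity on the completely reducible module $(\bC^{m|n})^{\otimes N}$ is classical. Then it remains to show $\gamma_N(U(\fgl(m,n)))^! \subseteq \pi(S_N)$ — i.e., that the image of the group algebra \emph{exhausts} the commutant of the superalgebra generated by the diagonal $\fgl(m,n)$-action. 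This is proved by a polarization/span argument: the operators $\gamma_N(g)$ for $g$ in the general linear supergroup $GL(m|n)$ span the same subspace of $\End((\bC^{m|n})^{\otimes N})$ as $\gamma_N(U(\fgl(m,n)))$, so their joint commutant consists of endomorphisms of $(\bC^{m|n})^{\otimes N}$ as a $GL(m|n)$-module; since the tensor factors are permuted $GL(m|n)$-equivariantly, such an endomorphism must be a linear combination of the permutation operators $\pi(\tau)$ — this last step uses that for $m+n\ge N$ (the generic case, from which the general case follows by a stabilization/specialization argument) the operators $\pi(\tau)$ are linearly independent and the space of $GL(m|n)$-equivariant endomorphisms has dimension exactly $|S_N|$. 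For $Q(n)$ one runs the parallel argument with the ``queer'' supergroup $Q(n)$ in place of $GL(m|n)$, using that the commutant of $\gamma_N(\mathrm{Mat}(n))$-type diagonal operators together with the single operator $\gamma_N(P)$ is spanned by $\pi(Se(N))$.

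The main obstacle is precisely this reverse inclusion, and within it the ``no extra signs / no extra operators'' bookkeeping: one must verify that the sign cocycle $c$ in the definition of $\pi$ is exactly the one forced by $GL(m|n)$- (resp.\ $Q(n)$-) equivariance of the tensor-factor permutation, so that the equivariant endomorphism algebra is spanned by $\{\pi(\tau)\}$ (resp.\ $\pi(Se(N))$) and not something larger, and that these operators are linearly independent in the range of parameters considered. Everything else — the commuting relation, the super double-commutant theorem, semisimplicity of the enveloping-algebra image on a completely reducible tensor module — is standard and can be cited from \cite{Serg-1} or \cite{Cheng-Wang}. Since the paper treats this theorem as a known input (it is attributed to Sergeev and used only as a tool for the center computation in Theorems \ref{thm_2.4} and \ref{thm_2.5}), I would keep the write-up to a pointer to \cite{Serg-1}, sketching only the commuting inclusion and the shape of the double-centralizer argument as above.
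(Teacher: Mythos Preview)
The paper does not prove this theorem at all: it is stated as a named, unproved result attributed to Sergeev \cite{Serg-1} (see also \cite{Ber-Reg}, \cite{Cheng-Wang}) and used only as input for Theorems~\ref{thm_2.4} and~\ref{thm_2.5}. You correctly anticipate this in your final paragraph, and your recommendation---cite \cite{Serg-1} and at most sketch the commuting inclusion plus the shape of the double-centralizer argument---matches exactly what the paper does (indeed the paper omits even the sketch). Your outline of the standard proof is sound as an optional supplement, but it is strictly more than the paper offers, so there is nothing to compare against.
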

 This theorem immediately  implies that for any central  element of the  group algebra  of $S_N$ or $Se(N)$ (e.g., for the characteristic function of a conjugacy class) there  exists  a central  element  of the corresponding  universal enveloping algebra  whose action on $(\bC^{m|n})^{\otimes N}$  (or, accordingly, on $(\bC^{n|n})^{\otimes N}$) coincides with the action of the given element. The goal of this section is to find such  elements. 
 
 The following lemma reconstructs the action of an element of $U(\fgl(m,n))$ using its image in differential operators  (cf. Section \ref{Sec-1}). 
  Consider a  scalar product  in the tensor space $(\bC^{(m|n)})^{\otimes N}$  with an orthonormal basis of monomials $\{e_{i_1}\otimes \dots\otimes e_{i_N}\}_{i_1,\dots, i_N\in \{1,\dots, m+n\}}$. 

 \begin{lemma}\label{lem_2.1}
 Let $a\in U(\fgl(m,n))$. Then 
 \begin{align}\label{eq_2.1}
 	(a(e_{j_1}\otimes \dots \otimes e_{j_N}), e_{i_1}\otimes \dots \otimes e_{i_N})
	=(-1)^R\partial(a)(x_{i_1j_1}\dots x_{i_Nj_N})\vert _{(x_{ij})=\text{Id}},
 \end{align}
 where $R$ depends only on $(i_1,\dots, i_N, j_1,\dots, j_N)$.
 \end{lemma}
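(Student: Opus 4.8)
The plan is to reduce the statement to a direct computation using the explicit differential-operator formula \eqref{eq_1.7} for $\partial$ on monomials, since every $a\in U(\fgl(m,n))$ is a linear combination of monomials $E_{i_1j_1}\cdots E_{i_lj_l}$ and both sides of \eqref{eq_2.1} are linear in $a$. So it suffices to take $a=E_{p_1q_1}\cdots E_{p_Lq_L}$ and compare the two sides.

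First I would compute the left-hand side. The action $\gamma_N$ of $E_{pq}$ on $(\bC^{m|n})^{\otimes N}$ is the derivation $\sum_{t=1}^N 1^{\otimes(t-1)}\otimes E_{pq}\otimes 1^{\otimes(N-t)}$, where $E_{pq}$ acts on a single factor by $E_{pq}e_s=\delta_q^s\, e_p$, up to the Koszul sign incurred by moving $E_{pq}$ past the first $t-1$ tensor factors. Applying the product $E_{p_1q_1}\cdots E_{p_Lq_L}$ to $e_{j_1}\otimes\cdots\otimes e_{j_N}$ and pairing with $e_{i_1}\otimes\cdots\otimes e_{i_N}$ yields a sum over functions assigning each of the $L$ matrix units to one of the $N$ tensor slots; the contribution of a given assignment is a product of Kronecker deltas matching the entries $i$, $j$ through the string of matrix units acting in each slot, times a sign coming from the accumulated parities.

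Next I would compute the right-hand side directly from \eqref{eq_1.7}: $\partial(E_{p_1q_1}\cdots E_{p_Lq_L})=\bigl(\sum_{k_1}x_{k_1p_1}\partial_{k_1q_1}\bigr)\cdots\bigl(\sum_{k_L}x_{k_Lp_L}\partial_{k_Lq_L}\bigr)$, apply this operator to $x_{i_1j_1}\cdots x_{i_Nj_N}$, and set all $x_{ij}$ equal to the identity matrix (so $x_{ij}\mapsto\delta_i^j$). Each $\partial_{k q}$ must hit one of the variables $x_{i_tj_t}$, producing $\delta_{k}^{i_t}\delta_{q}^{j_t}$ and removing that variable; the same combinatorics of assignments of the $L$ derivatives to the $N$ slots appears, and after evaluation at the identity the product of the $x$'s that were not differentiated contributes a further product of deltas. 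One checks the two expressions agree term-by-term up to an overall sign. The crucial point is that the sign discrepancy between the super-commutative reordering of the $x$'s and $\partial$'s on the differential-operator side and the Koszul signs from the tensor-space action depends only on the parities of the fixed indices $(i_1,\dots,i_N,j_1,\dots,j_N)$, not on the particular monomial $a$ or the assignment --- this is because all parity-sensitive reorderings involve only the symbols $e_{i_t}$, $x_{i_tj_t}$, whose parities are determined by the $i$'s and $j$'s. Hence one may pull out a single sign $(-1)^R$ with $R=R(i_1,\dots,i_N,j_1,\dots,j_N)$.

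The main obstacle is the careful bookkeeping of Koszul signs: on the left one must track the signs from commuting each matrix unit past earlier tensor factors and from the evaluation of the pairing against homogeneous basis vectors, while on the right one tracks the signs implicit in treating $x_{ij}$ and $\partial_{ij}$ as homogeneous elements of parity $p_{ij}$ and moving derivatives past variables. I expect that after fixing conventions one verifies that the per-term signs on both sides differ by exactly the fixed quantity $(-1)^R=(-1)^{\sum_{1\le s<t\le N}p(e_{i_s})p(e_{j_t})}$ (or a similar closed expression), which establishes \eqref{eq_2.1}. A clean way to organize this is induction on $L$: the case $L=0$ is the orthonormality of the basis, and the inductive step multiplies both sides by one more factor $E_{p_{L+1}q_{L+1}}$ respectively $\sum_k x_{kp_{L+1}}\partial_{kq_{L+1}}$, where the new Koszul sign on the left and the new reordering sign on the right again match up to a factor independent of the monomial.
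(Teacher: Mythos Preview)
Your outline is close to the paper's own argument in that both reduce to monomials $a=E_{m_1l_1}\cdots E_{m_rl_r}$ and set up a bijection between the terms contributing to each side. But there is a real gap at the crucial step.

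You assert that the sign discrepancy depends only on the parities of the fixed indices $(i_1,\dots,i_N,j_1,\dots,j_N)$ because ``all parity-sensitive reorderings involve only the symbols $e_{i_t}$, $x_{i_tj_t}$, whose parities are determined by the $i$'s and $j$'s.'' This is not true as stated. When you apply $E_{m_rl_r}$, then $E_{m_{r-1}l_{r-1}}$, and so on, the tensor slots pass through \emph{intermediate} states: after the first step, slot $t$ may hold $e_{m_r}$ rather than $e_{j_t}$ or $e_{i_t}$, and the Koszul sign incurred by the next matrix unit passing over slot $t$ involves $p(e_{m_r})$, which depends on the particular monomial $a$ and on the path, not just on the endpoints. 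The analogous issue arises on the differential-operator side, where the variable in position $t$ becomes $x_{i_t m_r}$ after one differentiation. So the per-term sign discrepancy is \emph{a priori} path-dependent, and showing that it is not is exactly the content of the lemma.

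Your proposed induction on $L$ runs into the same obstacle: the inductive hypothesis compares matrix elements between fixed basis vectors, but after $L$ steps you are at an intermediate state, not at $(i_1,\dots,i_N)$, and the additional Koszul sign from the $(L{+}1)$-st factor is computed against that intermediate state.

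The paper resolves this by tracking, for a fixed term, the running sign sequences $(-1)^{a(s)}$ on the tensor side and $(-1)^{b(s)}$ on the differential-operator side, and first proving that a swap of two adjacent steps that act on \emph{different} slots changes $a$ and $b$ by the same amount (namely $p_{m_{r-s+1}l_{r-s+1}}\,p_{m_{r-s}l_{r-s}}$). This allows every path to be reduced to a ``canonical'' one in which all operations on slot $t_1$ occur first, then slot $t_2$, etc., with $t_1<t_2<\cdots$. In the canonical form the intermediate states at each transition are forced to coincide with the $i$'s and $j$'s, and one can then read off that $R=a(r)-b(r)$ depends only on $(i_1,\dots,i_N,j_1,\dots,j_N)$. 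Your argument is missing this reduction (or an equivalent mechanism) that eliminates the dependence on intermediate states.
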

 \begin{proof}
 It suffices to consider $a= E_{m_1l_1}\dots E_{m_rl_r}$. The result of applying  $E_{m_{r+1-s}l_{r+1-s}}\dots E_{m_rl_r}$
 to a monomial $e_{j_1}\otimes \dots \otimes e_{j_N}$, $s=1,\dots, r-1$,  is a sum
 
  \begin{align}\label{eq_2.2}
 	 \sum\pm e_{k_1}\otimes \dots \otimes e_{k_N}.
   \end{align}
   There is a one-to-one  correspondence between the terms $\pm e_{k_1}\otimes \dots \otimes e_{k_N}$ of the sum (\ref{eq_2.2}) and the ways to obtain   the ordered set $(k_1\dots k_N)$ from the ordered set $(j_1\dots j_N)$ in $s$ steps  replacing $l_{r-t+1}$ with 
   $m_{r-t+1}$ on the $t$-th step, $t=1,\dots, s$. For $s=r$ the value  of $(E_{m_1l_1}\dots E_{m_rl_r}(e_{j_1}\otimes \dots \otimes e_{j_N}), e_{i_1}\otimes \dots \otimes  e_{i_N})$ equals the sum of coefficients  of all monomials of the form $e_{i_1}\otimes \dots \otimes e_{i_N}$
   in  (\ref{eq_2.2}). 
   
Similarly, we  find that 
  \begin{align}\label{eq_2.3}
 	\partial (E_{m_{r+1-s}l_{r+1-s}}\dots E_{m_r l_r})(x_{i_1j_1}\dots x_{i_Nj_N}) =\sum \pm x_{i_1k_1}\dots x_{i_Nk_N},
   \end{align}
  and  as in (\ref{eq_2.2}),  the   quantity $\partial (E_{m_{1}l_{1}}\dots E_{m_{r}l_{r}}) (x_{i_1j_1}\dots x_{i_Nj_N}) |_{(x_{ij})\,=\,\text{Id}}$
  is equal to the sum  of coefficients of the elements $x_{i_1i_1}\dots x_{i_Ni_N}$ in (\ref{eq_2.3}) for $s=r$. By (\ref{eq_1.7}), there is a one-to-one correspondence between  the  summands of (\ref{eq_2.3}) and the same paths of arriving at  $(k_1\dots k_N)$ from $(j_1\dots j_N)$ in $s$ steps. Thus, if $s=r$, for any monomial $e_{i_1}\otimes \dots \otimes e_{i_N}$ in (\ref{eq_2.2}) there is a single element $x_{i_1i_1}\dots x_{i_Ni_N}$ in the right-hand side of  (\ref{eq_2.3}) corresponding to this monomial. 
  
Let us  prove that if $(-1)^p$ is  the coefficient  of  a fixed term $e_{i_1}\otimes \dots \otimes e_{i_N}$  in (\ref{eq_2.2}),  then the coefficient  of the corresponding element $x_{i_1i_1}\dots x_{i_Ni_N}$  equals $(-1)^{p+R}$, where $R$ depends only on $(i_1,\dots, i_N, j_1,\dots, j_N)$. This argument will complete the proof fo  Lemma \ref{lem_2.1} because
 \begin{align*}
 	&\partial (E_{{m_1}l_{1}}\dots E_{m_r l_r})(x_{i_1j_1}\dots x_{i_Nj_N})|_{(x_{ij})=\text{Id}} =\sum (-1)^{p+R}=(-1)^R\sum (-1)^{p}  \\
	&=(-1)^R(E_{{m_1}l_{1}}\dots E_{m_r l_r} (e_{j_1}\otimes \dots e_{j_N}), e_{i_1}\otimes \dots \otimes e_{i_N}).
  \end{align*}
  We fix the  term $e=(-1)^{P^\prime}e_{i_1}\otimes \dots \otimes e_{i_N}$ and the corresponding element $x=(-1)^{P^{\prime\prime}}x_{i_1i_1}
  \dots x_{i_Ni_N}$. Let us construct a sequence of monomials  that traces the appearance of  $e$  in the sum (\ref{eq_2.2}) of the result of the action of  $E_{{m_1}l_{1}}\dots E_{m_r l_r}$ on $e_{j_1}\otimes \dots e_{j_N}$. The sequence consists of $r+1$ monomials:
  \\
  -- the first element is $e_0=e_{j_1}\otimes \dots e_{j_N}$;\\
  -- the last one is $e_r=e$;\\
  -- the $s$-th element is a basis monomial $e_s=(-1)^{a(s)}e_{k_1}\otimes \dots \otimes e_{k_N}$ such that the sum $E_{m_{r+1-s}l_{r+1-s}}e_{s-1}$ contains this element as a  summand.
  
  Note that the monomial $e_s$ differs from the monomial $e_{s+1}$ by a sign and  at most  in one of indices. 
  For $x$ we construct a similar sequence 
  \begin{center}
  $x_0=x_{i_1j_1}\dots x_{i_Nj_N},$\\
  \dots\\
  $x_s=(-1)^{b(s)}x_{i_1k_1}\dots x_{i_Nk_N}$,\\
  \dots\\
  $x_r=x$.
  \end{center}
Let us look at the change  of   coefficients $(-1)^{a(s)}$
 and $(-1)^{b(s)}$  in these sequences. Assume that
 \begin{align*}
 	e_{s-1}&= (-1)^{a(s-1)} e_{k_1}\otimes \dots \otimes e_{k_{t_1}=l_{r-s+1}}\otimes  \dots \otimes e_{k_{t_2}=l_{r-s}}\otimes \dots \otimes e_{k_N},\\
	e_{s}&= (-1)^{a(s)} e_{k_1}\otimes \dots \otimes e_{k_{t_1}=m_{r-s+1}}\otimes \dots \otimes e_{k_{t_2}=l_{r-s}}\otimes \dots \otimes e_{k_N},\\
        e_{s+1}&= (-1)^{a(s+1)} e_{k_1}\otimes \dots \otimes e_{k_{t_1}=m_{r-s+1}}\otimes \dots \otimes e_{k_{t_2}=m_{r-s}}\otimes  \dots \otimes e_{k_N}.
 \end{align*}
 Then 
    \begin{align*}
 	x_{s-1}&= (-1)^{b(s-1)} x_{{i_1}{k_1}}\dots x_{i_{t_1}l_{r-s+1}}\dots x_{i_{t_2}l_{r-s}}\dots x_{i_Nk_N},\\
	x_{s}&= (-1)^{b(s)} x_{{i_1}{k_1}}\dots x_{i_{t_1}m_{r-s+1}}\dots x_{i_{t_2}l_{r-s}}\dots x_{i_Nk_N},\\
	x_{s+1}&= (-1)^{b(s+1)} x_{{i_1}{k_1}}\dots x_{i_{t_1}m_{r-s+1}}\dots x_{i_{t_2}m_{r-s}}\dots x_{i_Nk_N}.\\
		 \end{align*}
At the same time, for 
   \begin{align*}
 	e_{s-1}&= (-1)^{a(s-1)} e_{k_1}\otimes \dots \otimes e_{k_{t_1}=l_{r-s+1}}\otimes \dots \otimes e_{k_{t_2}=l_{r-s}}\otimes  \dots \otimes e_{k_N},\\
	e^\prime _{s}&= (-1)^{a^\prime(s)} e_{k_1}\otimes \dots \otimes e_{k_{t_1}=l_{r-s+1}}\otimes \dots \otimes e_{k_{t_2}=m_{r-s}}\otimes \dots \otimes e_{k_N},\\
        e^\prime_{s+1}&= (-1)^{a^\prime(s+1)} e_{k_1}\otimes \dots \otimes e_{k_{t_1}=m_{r-s+1}}\otimes \dots \otimes e_{k_{t_2}=m_{r-s}}\otimes  \dots \otimes e_{k_N},
 \end{align*}   
 and
 \begin{align*}
 	x_{s-1}&= (-1)^{b(s-1)} x_{{i_1}{k_1}}\dots x_{i_{t_1}l_{r-s+1}}\dots x_{i_{t_2}l_{r-s}}\dots x_{i_Nk_N},\\
	x^\prime_{s}&= (-1)^{b^\prime(s)} x_{{i_1}{k_1}}\dots x_{i_{t_1}l_{r-s+1}}\dots x_{i_{t_2}m_{r-s}}\dots x_{i_Nk_N},\\
	x^\prime_{s+1}&= (-1)^{b^\prime(s+1)} x_{{i_1}{k_1}}\dots x_{i_{t_1}m_{r-s+1}}\dots x_{i_{t_2}m_{r-s}}\dots x_{i_Nk_N}\\
\end{align*}
we obtain
\[
a^\prime(s+1) - a(s+1) =b^\prime(s+1)- b(s+1)=p_{m_{r-s+1}l_{r-s+1}} \, p_{m_{r-s}l_{r-s}}.
\]
This means that the trajectory of replacements   can be reduced to some ``canonical'' form by  interchanging neighboring elements
without   changing the value of  $a(s)-b(s)$. We say that the sequence of monomials $e_0,\dots, e_r$ has a ``canonical'' form if the first $s_1$ elements  of the sequence may have different subscripts  only at the $t_1$-th place, the next $s_2$ elements  may have different subscripts only at  the $t_2$-th place, $\dots$, and the last $s_K$ elements may   have different subscripts only at the $t_K$-th palce, $s_1+s_2+\dots +s_K-1=r$,
$t_1<t_2<\dots<t_K$:
 \begin{align} \label{eq_2.4}
 \begin{split}
 	e_{s_1+\dots +s_d} &=\pm e_{k_1}\otimes \dots \otimes e_{k_ {t_{d}} (1)}\otimes \dots \otimes e_{k_N},\\
 	e_{s_1+\dots+ s_d+1}&=\pm e_{k_1}\otimes \dots \otimes e_{k_{t_{d}} (2)}\otimes \dots \otimes e_{k_N},\\
	\dots 	\\
	 e_{s_1+\dots +s_{d+1}-1}&=\pm e_{k_1}\otimes \dots \otimes e_{k_{t_{d}} (s_{d})}\otimes \dots \otimes e_{k_N}.
	 \end{split}
 \end{align}
 The ``canonical'' form  of a sequence $x_0,\dots, x_{r-1}, x$ is defined similarly. 
 
 Consider a ``canonical''  sequence  and its elements $e_{s_1 +\dots +s_d},\dots , e_{s_1+\dots +s_{d+1}-1 }$.
Note that 
  \begin{align*}
  	 a(s_1+\dots +s_{d+1}-1)= (p_{k_1}+\dots +p_{k_{t_{d+1}-1}})  (p_{l_{r+1-(s_1+\dots +s_d)}} +p_{m_{r+2- (s_1+\dots +s_{d+1}) }}) + a(s_1+\dots +s_d-1),\\
	  b(s_1+\dots +s_{d+1}-1)= (p_{i_1k_1}+\dots +p_{i_{t_{d+1}-1} k_{t_{d+1}-1}})  (p_{l_{r+1-(s_1+\dots +s_d)}} +p_{m_{r+2- (s_1+\dots +s_{d+1}) }}) + b(s_1+\dots +s_d-1).
  \end{align*} 
  Here $p_j=p(e_j)$ is the parity function on $\bC^{m|n}$.
  
  Since $t_1<\dots <t_K$,  for nontrivial contributions we must have that 
    \begin{align*}
   & k_1=i_1,\quad k_2=i_2,\quad \dots ,\quad k_{t_{d+1}-1}= i_{t_{d+1}-1},\\
    & k_{t_{d+1}}(0)= l_{r+1-(s_1+\dots + s_d)}=j_{t_{d+1}},\\
      &  k_{t_{d+1}}(s_{d+1})= m_{r+2-(s_1+\dots + s_{d+1})}=i_{t_{d+1}}.
     \end{align*} 
     Thus, $a(s_1-1)-b(s_1-1)$ depends only on $(i_1,\dots ,i_N, j_1,\dots, j_N)$. Using the induction on $d$ we obtain that $R=a(r)-b(r)$ also depends only on this set. 
 \end{proof}
 
 Let us define an action (not representation!) of $S(\fgl(m,n))$  on $(\bC^{(m|n)})^{\otimes N}$ as follows. For any monomial $E_{m_1l_1} \dots E_{m_rl_r} \in S(\fgl(m,n))$ and a basis  element $e_{j_1}\otimes\dots \otimes e_{j_N}$, we construct a (directed) tree of  sets of  subscripts:
 \\ -- the first element of the tree is $(j_1\dots j_N)$;
 \\ -- on the $s$-th step, $s=1,\dots, r$, the set
 \[
 	(k_1\dots k_{t_1}=l_{r+1-s} \dots k_{t_M}=l_{r+1-s}\dots k_N)
 \]
 generates the sets of subscripts
 \begin{align*}
	 (k_1\dots k_{t_1}&=m_{r+1-s} \dots k_{t_M}=l_{r+1-s}\dots k_N),\\
           &\dots  \\
          (k_1\dots k_{t_1}&=l_{r+1-s} \dots k_{t_M}=m_{r+1-s}\dots k_N),
 \end{align*}
 where the set  $(k_1\dots k_{t_d}=m_{r+1-s} \dots k_N)$
 appears on the $s$-th step of this procedure  if there were no changes on the $t_d$-th place through the steps $1,\dots, s-1$, $1\le t_1<\dots <t_d\le N$.
 
 For every set of subscripts in this tree we attach a coefficient $(\pm 1)$: the sign of  the set  $(k_1\dots k_{t_d}=m_{r+1-s} \dots k_N) $   equals $(-1)^{a+\left( \sum_{q=1}^{t_d-1} p_{k_q}\right)p_{m_{r+1-s }l_{r+1-s}}}$, where   $(-1)^a$  is the sign  of  the preceding set  
$(k_1\dots k_{t_d}=l_{r+1-s} \dots k_N)$ in the tree. 

We define  $(E_{m_1l_1} \dots E_{m_rl_r}(e_{j_1}\otimes \dots \otimes e_{j_N}), e_{i_1}\otimes \dots \otimes e_{i_N})$ to be equal to the sum of  coefficients of the sets of the form $(i_1\dots i_N)$ on the last step in the  tree. 

Note that this action of $S(\fgl(m,n))$  is similar to the action  of $U(\fgl(m,n))$  with the following  difference:  we imposed the condition  that in the  tree every index changes only once, while there can be several  changes at the same place through the action  of $E_{m_1l_1}\dots E_{m_rl_r}\in U(\fgl(m,n))$.
  
  Arguing as in the proof of Lemma \ref{lem_2.1}, we obtain
  \begin{lemma} \label{lem_2.2}
  For every $a\in S(\fgl(m,n))$ we have 
  \begin{align*}
  	(a(e_{j_1}\otimes \dots \otimes e_{j_N}), e_{i_1}\otimes \dots \otimes e_{i_N})=
	(-1)^R\tilde \partial(a) (x_{i_1j_1}\dots x_{i_Nj_N})|_{(x_{ij}=\text{Id})},
  \end{align*}
  where $R$ is as in  Lemma \ref{lem_2.1}.
  \end{lemma}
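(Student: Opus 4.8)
The plan is to mimic the proof of Lemma \ref{lem_2.1} almost verbatim, replacing the action of $U(\fgl(m,n))$ by the tree-action of $S(\fgl(m,n))$ and the homomorphism $\partial$ by the map $\tilde\partial=\partial\circ\sigma$. First I would reduce to a monomial $a=E_{m_1l_1}\dots E_{m_rl_r}\in S(\fgl(m,n))$. On the left-hand side, the tree construction for this action produces, by definition, a collection of sets of subscripts obtained from $(j_1\dots j_N)$ by successively replacing $l_{r+1-s}$ by $m_{r+1-s}$ at the $s$-th step, \emph{subject to the restriction that each position is touched at most once}; the matrix coefficient $(a(e_{j_1}\otimes\dots\otimes e_{j_N}),e_{i_1}\otimes\dots\otimes e_{i_N})$ is the signed count of those branches that arrive at $(i_1\dots i_N)$. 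On the right-hand side, I would invoke the explicit differential-operator formula \eqnref{eq_1.8} for $\tilde\partial$ from Proposition \ref{prop_1.4}: applying $\tilde\partial(E_{m_1l_1}\dots E_{m_rl_r})$ to $x_{i_1j_1}\dots x_{i_Nj_N}$ and then setting $(x_{ij})=\mathrm{Id}$ produces exactly a signed sum over the same combinatorial data, because each factor $x_{k_ql_q}\partial_{k_qj'}$ in \eqnref{eq_1.8}, acting on the monomial $x_{i_1j_1}\dots x_{i_Nj_N}$, differentiates in a single variable and multiplies by a single new $x$ — which after the evaluation $(x_{ij})=\mathrm{Id}$ forces precisely the ``change each index once'' bookkeeping that defines the $S(\fgl(m,n))$-tree.

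Next I would set up the same branch-by-branch correspondence used in Lemma \ref{lem_2.1}: fix a target monomial $e=(-1)^{P'}e_{i_1}\otimes\dots\otimes e_{i_N}$ and the corresponding monomial $x=(-1)^{P''}x_{i_1i_1}\dots x_{i_Ni_N}$, build the length-$(r+1)$ sequences $e_0,\dots,e_r$ and $x_0,\dots,x_r$ tracing their appearance, and compare the sign increments $a(s)-b(s)$. The sign rule for the $S(\fgl(m,n))$-action was defined (just above) to be $(-1)^{a+(\sum_{q=1}^{t_d-1}p_{k_q})p_{m_{r+1-s}l_{r+1-s}}}$, which is designed so that the very same reordering argument as in Lemma \ref{lem_2.1} applies: interchanging two neighboring replacements changes $a(s)$ and $b(s)$ by the identical amount $p_{m l}\,p_{m'l'}$, so one may pass to the ``canonical form'' \eqnref{eq_2.4} without altering $a(s)-b(s)$. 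Then, exactly as in Lemma \ref{lem_2.1}, in canonical form the nontrivial branches force $k_q=i_q$ for $q<t_{d+1}$ and the endpoints of the $t_{d+1}$-th index to be $j_{t_{d+1}}$ and $i_{t_{d+1}}$, so the per-block contribution to $a-b$ depends only on $(i_1,\dots,i_N,j_1,\dots,j_N)$; induction on the number of blocks gives that $R=a(r)-b(r)$ is the same function of the indices as in Lemma \ref{lem_2.1}. Summing over branches then yields $(-1)^R\tilde\partial(a)(x_{i_1j_1}\dots x_{i_Nj_N})|_{(x_{ij})=\mathrm{Id}}=(a(e_{j_1}\otimes\dots\otimes e_{j_N}),e_{i_1}\otimes\dots\otimes e_{i_N})$, which is the claim.

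The one genuinely new point to check — and the step I expect to be the main obstacle — is the matching of the \emph{sign} conventions: one must verify that the parity factor built into the definition of the $S(\fgl(m,n))$-tree action agrees, branch by branch, with the sign produced by the super-commutation of the differential operators $x_{k i}\partial_{kj}$ in \eqnref{eq_1.8} when they are moved past one another and applied to the monomial $x_{i_1j_1}\dots x_{i_Nj_N}$. Concretely one has to confirm the increment identity $a'(s+1)-a(s+1)=b'(s+1)-b(s+1)=p_{m_{r-s+1}l_{r-s+1}}\,p_{m_{r-s}l_{r-s}}$ in the present (symmetric-algebra) setting, just as in the proof of Lemma \ref{lem_2.1}; this is a finite parity computation with $p(\overline k)=\sum_{q<t}p_{k_qj_q}p_{k_ti_t}$ from \eqnref{eq_1.8}. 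Once that bookkeeping is in place, everything else is a transcription of the proof of Lemma \ref{lem_2.1}, since the combinatorics of ``replace each index once'' is literally the same on both sides. Hence the lemma follows by arguing as in the proof of Lemma \ref{lem_2.1}, with $\partial$ replaced by $\tilde\partial$ and \eqnref{eq_1.7} replaced by \eqnref{eq_1.8}.
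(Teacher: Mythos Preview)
Your proposal is correct and follows exactly the approach the paper takes: the paper's entire proof is the single phrase ``Arguing as in the proof of Lemma~\ref{lem_2.1}, we obtain'', and you have spelled out precisely that argument, replacing the $U(\fgl(m,n))$-action by the tree-action of $S(\fgl(m,n))$ and $\partial$ by $\tilde\partial$ via \eqref{eq_1.8}. Your identification of the sign-matching as the only point requiring care is apt, and the reduction to canonical form you describe is the same mechanism as in Lemma~\ref{lem_2.1}.
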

 \begin{corollary}\label{cor_2.3}
 Any $a\in S(\fgl(m,n))$ and its image $\sigma(a)\in U(\fgl(m, n))$ act on the tensor space identically.
 \end{corollary}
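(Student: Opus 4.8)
The plan is to reduce the corollary to the two matrix-coefficient formulas already proved in Lemmas~\ref{lem_2.1} and~\ref{lem_2.2}, combined with the identity $\tilde\partial=\partial\circ\sigma$ from Proposition~\ref{prop_1.4}. The tensor space $(\bC^{(m|n)})^{\otimes N}$ carries the scalar product for which the monomials $e_{i_1}\otimes\dots\otimes e_{i_N}$ form an orthonormal basis, so two linear operators on it coincide if and only if all of their matrix coefficients $\bigl(\,\cdot\,(e_{j_1}\otimes\dots\otimes e_{j_N}),\,e_{i_1}\otimes\dots\otimes e_{i_N}\bigr)$ agree. Hence it suffices to compare these coefficients for the action of $a\in S(\fgl(m,n))$ and for the action of $\sigma(a)\in U(\fgl(m,n))$, and by linearity of both actions and of $\sigma$ we may assume $a=E_{i_1j_1}\dots E_{i_Mj_M}$ is a basis monomial of $S(\fgl(m,n))$.

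Now set $b=\sigma(a)\in U(\fgl(m,n))$, so that $\tilde\sigma(b)=a$ and, by Proposition~\ref{prop_1.4}, $\partial(b)=\partial(\sigma(a))=\tilde\partial(a)$. Applying Lemma~\ref{lem_2.1} to $b$ and Lemma~\ref{lem_2.2} to $a$, for every choice of indices $(i_1,\dots,i_N,j_1,\dots,j_N)$ we obtain
\begin{align*}
  \bigl(b(e_{j_1}\otimes\dots\otimes e_{j_N}),\,e_{i_1}\otimes\dots\otimes e_{i_N}\bigr)
  &= (-1)^{R}\,\partial(b)(x_{i_1j_1}\dots x_{i_Nj_N})\big|_{(x_{ij})=\text{Id}},\\
  \bigl(a(e_{j_1}\otimes\dots\otimes e_{j_N}),\,e_{i_1}\otimes\dots\otimes e_{i_N}\bigr)
  &= (-1)^{R}\,\tilde\partial(a)(x_{i_1j_1}\dots x_{i_Nj_N})\big|_{(x_{ij})=\text{Id}},
\end{align*}
with the \emph{same} sign exponent $R=R(i_1,\dots,i_N,j_1,\dots,j_N)$, as recorded in the statement of Lemma~\ref{lem_2.2}. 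Since $\partial(b)=\tilde\partial(a)$, the right-hand sides coincide, hence so do the left-hand sides. As this holds for all basis vectors $e_{j_1}\otimes\dots\otimes e_{j_N}$ and $e_{i_1}\otimes\dots\otimes e_{i_N}$, the operators $a$ and $\sigma(a)=b$ act identically on $(\bC^{(m|n)})^{\otimes N}$.

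The substantive content of the argument is entirely absorbed into Lemmas~\ref{lem_2.1} and~\ref{lem_2.2}; once both matrix-coefficient formulas are in hand \emph{with the very same factor $(-1)^R$}, the corollary is immediate. The delicate point — handled in the proof of Lemma~\ref{lem_2.1} via the ``canonical form'' of a sequence of index replacements — is precisely that the auxiliary sign $R$ does not depend on the intermediate combinatorics (the choice of summation paths, the order in which indices are changed) but only on the external data $(i_\bullet,j_\bullet)$. That path-independence is exactly what guarantees the cancellation of the $(-1)^R$ prefactors when the two expressions above are equated, so I expect no further obstacle here.
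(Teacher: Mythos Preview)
Your proof is correct and follows exactly the same route as the paper's own argument: invoke Lemmas~\ref{lem_2.1} and~\ref{lem_2.2} together with the defining relation $\tilde\partial=\partial\circ\sigma$, noting that the sign $(-1)^R$ is the same in both lemmas. The paper compresses this into a single sentence; you have simply spelled out the matrix-coefficient comparison in detail (and the reduction to basis monomials, while harmless, is not strictly needed since both lemmas are stated for arbitrary elements).
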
 
 \begin{proof}
 The assertion follows from Lemmas \ref{lem_2.1} and \ref{lem_2.2} and the fact that $\tilde \partial =\partial \circ \sigma$.
 \end{proof}
 
 We now introduce the central objects. With a partition $\rho$,  $|\rho|\le N$, we associate an element $a_{\rho, N}\in Z(S_N)$ as follows. Let $\rho_1,\dots, \rho_l$ be the parts of $\rho$, and $(a,\dots ,z)$ stand for  a cyclic permutation  $a\to \dots \to z \to a$. Then, denoting $k=|\rho|=\rho_1+\dots+\rho_l$, we get
  \begin{align*}
 	 a_{\rho, N} =\sum_{\overline i}(i_1,\dots, i_{\rho_1})(i_{\rho_1+1},\dots, i_{\rho_1+\rho_2})\dots(i_{\rho_1+\dots +\rho_{l-1}},\dots, i_{k}),
   \end{align*}
   where the summation is taken over all  sequences of pairwise distinct numbers 
$\overline i = (i_1,\dots, i_k)$, $i_t\in\{1,\dots, N\}, t=1,\dots, k$.
   We also define an element $q_{\rho, N}\in Z(Se(N))$ by
   \[
   q_{\rho, N}=\sum_{\overline i}\sum_\alpha (a_{\alpha_1}\dots a_{\alpha_s})(i_1,\dots, i_{\rho_1})\dots (i_{\rho_1+\dots +\rho_{l-1},} \dots, i_k)
   (a_{\alpha_1} \dots a_{\alpha_s})^{-1},
   \]
   where the sum is taken  over all distinct $\overline i = (i_1,\dots, i_k)$, $i_t\in\{1,\dots, N\}, t=1,\dots, k$,  and all different subsets $\alpha =\{\alpha_1<\alpha_2<\dots <\alpha_s\}$
   of the set $\{i_1,\dots, i_k\}$.
   
   On the other side, let 
   \begin{align*}
	I_k=\sum_{\overline i}  (-1)^{p_{i_2}+\dots +p_{i_k}} E_{i_1 i_2} E_{i_2i_3}\dots E_{i_ki_1} \in S(\fgl(m,n)),
   \end{align*}
   and 
      \begin{align*}
	J_k=\sum_{\overline i}  (-1)^{p_{i_2}+\dots +p_{i_k}} F_{i_1 i_2} F_{i_2i_3}\dots F_{i_ki_1} \in S(Q(n)).
   \end{align*}
   Here $F_{ij}$ is the sum  $E_{ij}+E_{\delta(i)\delta(j)}$ of two  matrix units, where $\delta $  is a parity-swapping involution defined by
   $\delta(i) \equiv  i+n \mod  2n$.
   
   We will prove that $a_{\rho, N}$ and $I_\rho=I_{\rho_1}\dots I_{\rho _l}$ (as well as the image of $I_\rho$ in the universal enveloping algebra 
   under the special symmetrization, cf. Corollary \ref{cor_2.3}) act in the tensor space identically. The same assertion holds for $q_{\rho, N}$ and $J_\rho = J_{\rho _1}\dots J_{\rho_l}$. 
   \begin{theorem}\label{thm_2.4}
   In $(\bC^{m|n})^{\otimes N}$, $a_{\rho, N}\in \bC[S_N]$ and $\sigma (I_\rho)\in U(\fgl(m,n))$ act identically.
   \end {theorem}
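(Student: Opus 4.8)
The plan is to reduce the statement to a combinatorial identity of operators on $(\bC^{m|n})^{\otimes N}$. By \corref{cor_2.3}, the element $I_\rho\in S(\fgl(m,n))$ and its image $\sigma(I_\rho)\in U(\fgl(m,n))$ act identically on $(\bC^{m|n})^{\otimes N}$, where $I_\rho$ acts through the action of $S(\fgl(m,n))$ introduced before \lemref{lem_2.2}. Hence it suffices to prove that $a_{\rho,N}$, acting via $\pi$, and $I_\rho$, acting via that action, coincide as operators; I would check this by comparing matrix coefficients in the basis $\{e_{j_1}\otimes\dots\otimes e_{j_N}\}$.

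First I would handle a single cycle, $\rho=(k)$. I would unwind the action of the monomial $E_{i_1i_2}E_{i_2i_3}\dots E_{i_ki_1}$ on $e_{j_1}\otimes\dots\otimes e_{j_N}$: processing the matrix units from right to left, the ``each index changes at most once'' rule forces the $k$ units to be applied to $k$ pairwise distinct tensor slots $t_1,\dots,t_k$; matching column index with current index then forces $i_1=j_{t_1}$ and $i_s=j_{t_{k+2-s}}$ for $s=2,\dots,k$, and after all substitutions the contents of the slots $t_1,\dots,t_k$ are cyclically shifted. Summing over the matrix indices $\overline i$ (now determined by the slots), $I_k$ becomes a sum over all ordered tuples of distinct slots of $\pm$ the corresponding cyclic permutation of tensor factors --- the same shape as $a_{(k),N}$. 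What remains is to match the signs: the prefactor $(-1)^{p_{i_2}+\dots+p_{i_k}}$ in $I_k$, times the sign accumulated along the tree, should equal the twist $c(\tau^{-1}p(\overline j),\tau)$ by which $\pi$ acts. I would prove this by induction on $k$; this is exactly what the choice of the prefactor $(-1)^{p_{i_2}+\dots+p_{i_k}}$ is designed for.

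For general $\rho$ I would write $I_\rho=I_{\rho_1}\cdots I_{\rho_l}$, expand the product in $S(\fgl(m,n))$ with the $k=|\rho|$ matrix units ordered block by block, and run the same analysis again. Since a slot once used is frozen, the $\rho_1$ units of the first block produce a $\rho_1$-cycle on $\rho_1$ distinct slots exactly as above, the next $\rho_2$ units a $\rho_2$-cycle on $\rho_2$ further, disjoint slots, and so on; because later-block slots are frozen, every block reads the \emph{original} indices $j$, so the blocks act independently on disjoint slot-sets. Consequently, for each choice of $k$ distinct slots partitioned into an ordered family of cyclically ordered sets of sizes $\rho_1,\dots,\rho_l$ there is precisely one contributing term in the expansion of $I_\rho$, equal to $\pm$ the product of the corresponding disjoint cycles applied to $e_{j_1}\otimes\dots\otimes e_{j_N}$ --- and this is exactly the family of terms produced by $a_{\rho,N}(e_{j_1}\otimes\dots\otimes e_{j_N})$, with the same multiplicities (rotations inside a block, permutations of equal-size blocks). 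The signs then match block by block by the single-cycle step, using that $c$ is multiplicative over disjoint supports. Together with the first paragraph this gives the theorem.

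The step I expect to be the main obstacle is the sign verification in the single-cycle case: carefully tracking the parities picked up as indices are carried past one another along the tree and confirming that, together with the prefactor, they reproduce the cocycle $c$. The remaining work is bookkeeping of disjoint slots --- but that bookkeeping is in fact the conceptual core, since it is precisely the ``each index changes only once'' feature of the action of $S(\fgl(m,n))$ that automatically annihilates all degenerate (overlapping-cycle) configurations and thereby disentangles the free product $I_{\rho_1}\cdots I_{\rho_l}$ into the disjoint-cycle sum $a_{\rho,N}$.
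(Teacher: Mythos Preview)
Your reduction via \corref{cor_2.3} and the term-by-term bijection are exactly what the paper does. The difference lies in how the signs are verified.

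You propose to settle the single-cycle case first (sign ``by induction on $k$'') and then assemble general $\rho$ block by block over disjoint slot-sets. The paper instead treats general $\rho$ in one pass: it checks by a direct computation that the signs agree for the single \emph{canonical} slot assignment $t_s=s$ (the chosen $k$ slots are $1,\dots,k$ in order), and then shows that the sign discrepancy is unchanged when the slot assignment is conjugated by an adjacent transposition $(t,t+1)$; since $S_N$ is generated by these, this reduces every slot assignment to the canonical one. This transposition argument is the concrete mechanism the paper uses, and it replaces your unspecified ``induction on $k$'' --- which is the one genuinely soft spot in your outline, since the monomial for a $(k{+}1)$-cycle does not factor through that for a $k$-cycle in $S(\fgl(m,n))$ in any evident way. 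You could of course run the paper's transposition argument already at the single-cycle stage and then keep your block-by-block assembly.

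One more point to tighten in your general-$\rho$ step: the $S(\fgl(m,n))$-action of the product $I_{\rho_1}\cdots I_{\rho_l}$ is \emph{not} the operator composition of the $S$-actions of the $I_{\rho_j}$ (that composition would produce overlapping-slot terms, which are absent from $a_{\rho,N}$). What is true, and what you essentially say in your last paragraph, is that for each fixed disjoint-slot configuration the tree sign for the long monomial equals the product of the block tree signs, each computed against the intermediate state left by the previously processed blocks. Hence the single-cycle identity must be used as an \emph{operator} identity, valid for every basis vector, so that it applies to each block against the already-modified state; with that understood your composition argument goes through. The appeal to ``$c$ multiplicative over disjoint supports'' should be replaced by this sequential-application reasoning, since the block signs on both sides genuinely depend on the contents of the other blocks' slots.
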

   \begin{proof}
   By Corollary \ref{cor_2.3}, for any $a\in S(\fgl(m,n))$ the actions of $a$ and $\sigma(a)\in U(\fgl(m,n))$ in the tensor space coincide. The value of $(I_\rho(e_{j_1}\otimes \dots \otimes e_{j_N}), e_{i_1}\otimes \dots \otimes e_{i_N} )$ equals the sum of coefficients  $(\pm 1)$ of the summands $e_{i_1}\otimes \dots \otimes e_{i_N}$ appearing  after the application of $I_{\rho}$ to $e_{j_1}\otimes \dots \otimes e_{j_N}$. Moreover, the number of these summands equals  the number of  ways 
   to choose    $\rho_1+\dots +\rho_l$ different indices from the sequence $(j_1,\dots, j_N)$, rearrange by  the cyclic shift the first $\rho_1$ chosen indices, do the same for the next $\rho_2$ indices, ..., for the last $\rho_l$ indices, and finally to arrive at $(i_1,\dots, i_N)$.
   
   A  similar conclusion can be made for $(a_{\rho,N}(e_{j_1}\otimes \dots \otimes e_{j_N}),  e_{i_1}\otimes \dots \otimes e_{i_N})$. This means that we have  a  natural one-to-one correspondence between the terms proportional to   $e_{i_1}\otimes \dots \otimes e_{i_N}$  in the sums for 
   $a_{\rho,N}(e_{j_1}\otimes \dots \otimes e_{j_N})$ and    $I_{\rho}(e_{j_1}\otimes \dots \otimes e_{j_N})$.
   
   Let us prove that the corresponding  monomials  have the same coefficients. Suppose $T$ is one of the summands of $I_\rho$:
   \begin{align*}
   	T&= (-1)^{Q(T)} (E_{i_1 i_2} \dots E_{i_{\rho_1} i_1} )\dots  (E_{i_{1+\Sigma_{l-1}} i_{2+\Sigma_{l-1}} } \dots E_{  i_{\Sigma_l}  i_{1+\Sigma_{l-1}  } }),\\
	Q(T)&=p_{i_2}+\dots +p_{i_{\rho_1}}+\dots +p_{i_{2+\Sigma_{l-1}}} +\dots +p_{i_{\Sigma_l}},\\	
   \end{align*}
 where  $ \Sigma_s=\sum_{t=1}^s\rho_t$, and  in particular,  $\Sigma_l=|\rho|=k$.
   Direct computation shows that the coefficient of the monomial
   \[
   	(e_{i_{\rho_1}} \otimes e_{i_{1}} \otimes \dots \otimes e_{i_{\rho_1-1}} )\otimes \dots \otimes 
	(e_{i_{\Sigma_l}} \otimes e_{i_{\Sigma_{l-1}+1}} \otimes \dots \otimes e_{i_{\Sigma_l-1}} )\otimes (e_{i_{N-\Sigma_l}}\otimes \dots  \otimes e_{i_N})
   \]
   in the sum 
      \[
   	T((e_{i_{1}} \otimes \dots \otimes e_{i_{\rho_1}}) \otimes \dots \otimes 
	(e_{i_{1+\Sigma_{l-1}} } \otimes \dots \otimes e_{i_{\Sigma_{l}}} )\otimes \dots \otimes (e_{i_{N-\Sigma_l} }\otimes \dots \otimes e_{i_N}))
	\]
	is equal to the coefficient of the same monomial in the  result of the action of a summand in $a_{\rho, N}$ of the form  $\tau =(1,2,\dots, \rho_1)\dots (1+\Sigma_{l-1},\dots, \Sigma_l) $ on the same initial vector. 
	
	Now we shall prove that  for any basis element $e\in (\bC^{(m|n)})^{\otimes N}$ with the indices  $i_1,\dots, i_{k}$
	on the $\pi(1)$-th, ... $\pi(k)$-th places,  $\pi\in S_N$, the corresponding summands of the results of actions of $I_\rho$ and $a_{\rho, N}$ on this basis element have the same coefficients. 
	
	First, we compare the coefficients  of two monomials with indices   $i_1,\dots, i_{k}$  on the places corresponding to permutations   $\pi$ and $\tilde \pi=(t,t+1)\pi$, $t\in \{1,\dots, N-1\}$. Let us fix a monomial 
	$e_T=\pm e_{i_{(\pi\tau)^{-1}(1)}}\otimes \dots  \otimes e_{i_{ (\pi\tau) ^{-1}(N)} }$ in the sum  $T(e)$, where 
	$e=e_{i_{\pi^{-1}(1)}} \otimes \dots \otimes e_{i_{\pi^{-1}}(N)}$, and  a monomial 
	$\tilde e_T=\pm e_{i_{(\tilde \pi\tau)^{-1}(1)}}\otimes \dots  \otimes e_{i_{ (\tilde\pi\tau) ^{-1}(N)} }$ in $T(\tilde e)$, where 
	$\tilde e=e_{i_{\tilde \pi^{-1}(1)}} \otimes \dots \otimes e_{i_{\tilde \pi^{-1}}(N)}$. The coefficient of  $e_T$
	in $(\pi\tau\pi^{-1})(e)$  is different from the coefficient of  $\tilde e_T$ in $(\tilde \pi\tau\tilde \pi^{-1}) (\tilde e)$ by $(-1)^{\tilde p}$, where 
	\[
		 \tilde p = p_{i_{\pi^{-1}(t)}} p_{i_{\pi^{-1}(t+1)}} +p_{i_{(\pi\tau)^{-1}(t)}} p_{i_{(\pi\tau)^{-1}(t+1)}}.
	\]
To compute the difference of  coefficients of corresponding monomials $e_T$ in $T(e)$ and $\tilde e_T$ in $T(\tilde e)$ we need to consider the following cases:
\begin{enumerate}
\item 
$
T=(-1)^{Q(T)} E_{i_1 i_2}\dots E_{i_{(\pi\tau)^{-1}(t)} i_{\pi^{-1}(t)}} \dots E_{i_{(\pi\tau)^{-1}(t+1)} i_{\pi^{-1}(t+1)}} \dots
 E_{i_{\Sigma_l} i_{1+\Sigma_{l-1}}};
$
\item 
$
T=(-1)^{Q(T)} E_{i_1 i_2}\dots E_{i_{(\pi\tau)^{-1}(t+1)} i_{\pi^{-1}(t+1)}} \dots E_{i_{(\pi\tau)^{-1}(t)} i_{\pi^{-1}(t)}} \dots
 E_{i_{\Sigma_l} i_{1+\Sigma_{l-1}}};
$
\item 
the set $\{i_1,\dots, i_{\Sigma_l}\}$ does not contain $\pi^{-1}(t)$ or $\pi^{-1}(t+1)$.

\end{enumerate}

A straightforward computation shows that in all cases the ratios of the coefficients coincide with $(-1)^{\tilde p}$. Symmetric group $S_N$ is generated  by transpositions  $(t,t+1)$, $t=1,\dots, N-1$, hence any permutation  $\pi$ can be represented as a  product  of transpositions. 
We already proved that the signs  match   when $\pi$ is the unity of $S_N$. This implies that the coefficients of $e_T$ in $T(e)$ and $\pi\tau\pi^{-1}(e)$ coincide  for any $\pi \in S_N$. The proof  is complete. 
   \end{proof}
   \begin{theorem}\label{thm_2.5}
   In $(\bC^{n|n})^{\otimes N}$, $q_{\rho, N}\in \bC[Se(N)]$ and   $\sigma(J_{\rho})\in U(Q(n))$ act identically. 
   \end{theorem}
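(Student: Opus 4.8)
The plan is to follow the architecture of the proof of \thmref{thm_2.4}, replacing the matrix units $E_{ij}$ by the combinations $F_{ij}=E_{ij}+E_{\delta(i)\delta(j)}$, replacing symmetric-group elements by their conjugates under products of the generators $a_j$ of the Sergeev group, and tracking the extra signs produced by $P$. The first step is to record the $Q(n)$-analogue of \corref{cor_2.3}: for every $a\in S(Q(n))$ the element $a$ and its image $\sigma(a)\in U(Q(n))$ act identically on $(\bC^{n|n})^{\otimes N}$. This is immediate, since $S(Q(n))\subset S(\fgl(n,n))$, the map $\sigma$ restricts to a linear isomorphism $S(Q(n))\xrightarrow{\sim}U(Q(n))$ (as noted at the end of \secref{Sec-1}), and \corref{cor_2.3} applies verbatim with $m=n$, the ambient tensor space being exactly $(\bC^{n|n})^{\otimes N}$. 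Hence it suffices to prove that $J_\rho\in S(Q(n))$ and $q_{\rho,N}\in\bC[Se(N)]$ act identically on $(\bC^{n|n})^{\otimes N}$, where $S(Q(n))$ acts through the ``tree'' action of $S(\fgl(n,n))$ from \secref{Sec-2} applied to the expansion of $J_\rho$ in matrix units.

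Next I would expand both sides into elementary pieces. On the algebra side, substitute $F_{ij}=E_{ij}+E_{\delta(i)\delta(j)}$ in $J_\rho=J_{\rho_1}\cdots J_{\rho_l}$ and multiply out; as $J_\rho$ is a product of $k=|\rho|$ factors of the form $F_{ab}$, this yields $2^{k}$ monomials in the $E_{ab}$ for each choice of value-sequence, each acted upon by the tree rule of \secref{Sec-2}. On the group side, for each distinct $\overline i=(i_1,\dots,i_k)$ the element $q_{\rho,N}$ contributes a sum over the $2^{k}$ subsets $\alpha\subseteq\{i_1,\dots,i_k\}$ of $(a_{\alpha_1}\cdots a_{\alpha_s})\,\tau\,(a_{\alpha_1}\cdots a_{\alpha_s})^{-1}$, with $\tau$ the product of cyclic permutations attached to $\overline i$. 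Using that $\pi(a_j)$ replaces the $j$-th tensor factor $e_\ell$ by $\pm e_{\delta(\ell)}$ (and $\pi(\varepsilon)=-\mathrm{Id}$), I would compute that $\pi\big((a_{\alpha_1}\cdots a_{\alpha_s})\tau(a_{\alpha_1}\cdots a_{\alpha_s})^{-1}\big)$ is $\pi(\tau)$ conjugated by the operator that applies the $\delta$-twist to the slots indexed by $\alpha$, up to an explicit sign. The key observation is that for fixed $\overline i$ these $2^{k}$ subsets are in natural bijection with the $2^{k}$ monomials arising from expanding the $F$'s: choosing $E_{\delta(i_t)\delta(i_{t+1})}$ instead of $E_{i_t i_{t+1}}$ in the $t$-th factor corresponds to $\delta$-twisting the affected slot.

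Granted this correspondence, the rest follows the template of \thmref{thm_2.4}: one checks that a given basis monomial $e_{i_1}\otimes\cdots\otimes e_{i_N}$ occurs with the same coefficient on both sides, first in the ``standard'' configuration where the cyclically-shifted (and possibly $\delta$-twisted) indices sit precisely where $\tau$ places them, and then propagates to an arbitrary configuration $\pi\in S_N$ by writing $\pi$ as a product of adjacent transpositions $(t,t+1)$ and verifying that each such transposition changes both coefficients by the same sign, via the same case analysis as in the three cases of the proof of \thmref{thm_2.4}, now enlarged to account for the $\delta$-twisted positions.

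The hard part will be the sign bookkeeping. One must show that the signs $\pm1$ picked up whenever $P$ acts (which differ for indices $\le n$ and indices $>n$), the sign in $P^{2}=-\mathrm{Id}$ carried by $\varepsilon$, the parity signs built into the tree action of $S(Q(n))$, the parity prefactors in $J_\rho$, and the values of the cocycle $c$ defining $\pi$ on $Se(N)$ all assemble into one and the same total sign on the two sides. Once the bijection of the previous paragraph is pinned down this becomes a finite, if delicate, verification; I expect the cleanest route is to isolate the contribution of a single $P$-conjugation, show that it commutes with the coefficient comparison already carried out in the untwisted $\fgl$-case of \thmref{thm_2.4}, and then iterate over the subset $\alpha$.
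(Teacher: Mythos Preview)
Your plan is correct and, at its core, matches the paper's argument: reduce via \corref{cor_2.3} to comparing the tree action of $J_\rho\in S(Q(n))$ with the action of $q_{\rho,N}$, set up the bijection between the $2^{k}$ subsets $\alpha$ on the group side and the $2^{k}$ choices of $E$ versus $E_{\delta(\cdot)\delta(\cdot)}$ in the expansion of the $F$'s on the algebra side, and then verify that the signs agree.

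The one organizational difference worth noting is that the paper does \emph{not} redo the transposition case analysis you sketch in your third paragraph. Instead it follows exactly the shortcut you propose in your last paragraph: it takes the conclusion of \thmref{thm_2.4} as a baseline (signs already match when $\alpha=\varnothing$, i.e.\ for ``pure'' permutations with no parity changes), and then computes, on each side separately, the multiplicative correction introduced by the twist. On the $J_\rho$ side one compares the sign of the action of a monomial $T_1$ containing some $E_{\delta(l_s)\delta(l_{s+1})}$ factors with that of the corresponding untwisted $T_2$, obtaining a factor $s_1=(-1)^{\sum(p_{i_k}+p_{j_k})}$ where the sum runs over unmarked positions having an odd number of marked positions to their left. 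On the $q_{\rho,N}$ side one compares $(a_{\alpha_1}\cdots a_{\alpha_l})\pi(a_{\alpha_1}\cdots a_{\alpha_l})^{-1}$ with $\pi$, collecting four factors $s_0,s_2,s_3,s_4$ coming respectively from $\varepsilon^l$, from the $\pm1$ in the matrix $P$ acting at the marked slots, and from carrying the two products of $a_\alpha$'s across the odd tensor factors. The proof concludes by checking $s_0s_1s_2s_3s_4=1$. So your instinct that ``the cleanest route is to isolate the contribution of a single $P$-conjugation \dots\ and then iterate over the subset~$\alpha$'' is precisely what the paper does; the enlarged three-case analysis you outline beforehand is unnecessary.
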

   \begin{proof}
   We follow the lines of the  proofs of Lemma \ref{lem_2.1} and Theorem \ref{thm_2.4}. Note that both, $(J_\rho(e_{j_1}\otimes \dots \otimes e_{j_N}),e_{i_1}\otimes \dots  \otimes e_{i_N}) $  and $(q_{\rho,N}(e_{j_1}\otimes \dots \otimes e_{j_N}),e_{i_1}\otimes \dots  \otimes e_{i_N}) $, are  the sums  of signs  of all different ways of  reaching    the sequence  $(i_1,\dots, i_N)$ from the sequence $(j_1,\dots, j_N)$  by the following procedure. 
   We choose $k=|\rho|$ different  places  in the sequence  $(i_1,\dots, i_N)$. Then we choose some places among them (called {\it marked places}),  where we change  the parities of the indices by $i\mapsto \delta(i)\equiv i+n\,\text{mod}\,2n$. After that we  apply a permutation of cyclic type $\rho$ to  the indices  on  the chosen $k$ places   and change  the parities  on all  marked places again. Our goal  is to prove that the resulting signs coincide for  $J_\rho$ and $q_{\rho,N}$. 
   
  We shall compare the signs of the monomials   with the ones from the proof of Theorem \ref{thm_2.4},  since  we know that the signs are the same for ``pure'' permutations  without any changes of parities. For $J_\rho$  we have to compare the signs of action of two monomials: the action of the monomial
   \[
   	T_1= E_{l_1l_2}\dots E_{\delta(l_s)\delta(l_{s+1})}\dots E_{\delta(l_t)\delta(l_{t+1})}\dots  E_{l_kl_1}
	   \]
 on the vector 
\[
	e_{j_1}\otimes \dots \otimes e_{j_N}\quad  =\quad e_{j_1}\otimes \dots \otimes e_{\delta(l_{s+1})}\otimes \dots \otimes e_{\delta(l_{t+1})}\otimes  \dots \otimes e_{j_N},
\]	   
where  the indices $\delta(l_{t+1})$ and $\delta(l_{s+1})$ are on marked places,  with the resulting vector 
\[
	e_{i_1}\otimes \dots \otimes e_{i_N} \quad =\quad e_{i_1}\otimes \dots \otimes e_{\delta(l_{s})}\otimes \dots \otimes e_{\delta(l_{t})}\otimes  \dots \otimes e_{i_N},
\]	
and the action of the monomial 
  \[
   	T_2= E_{l_1l_2}\dots E_{l_s l_{s+1}}\dots E_{l_t l_{t+1}}\dots  E_{l_kl_1}
	   \]
on the vector 
  \[
	e_{j_1}\otimes \dots\otimes e_{l_{s+1}} \otimes \dots \otimes e_{l_{t+1}}\otimes \dots \otimes e_{j_N} 
	\]
with the resulting vector
   \[
	e_{i_1}\otimes \dots\otimes e_{l_{s}} \otimes \dots \otimes e_{l_{t}}\otimes \dots \otimes e_{i_N}.
	\]	
	The difference in signs in these two cases  accumulates  only on the marked places where all the vectors  changed their parities. Therefore, the difference in the signs of the actions by $T_1$  and $T_2$  is the sign that we obtain by brining  the matrix  units through the marked places. This difference equals $s_1=(-1)^{ \sum p_{i_k} +p_{j_k}}$, where the sum is taken over all (unmarked) $k$-th places  that  have   an odd number of the marked  places to the left from them. 

We now turn to the element $q_{\rho, N}$.  Let us compute the difference between the sign of the action of the element $(a_{\alpha_1}\dots a_{\alpha_l})\pi(a_{\alpha_1}\dots a_{\alpha_l})^{-1}$ and the sign  of the action  of $\pi \in S_N$. The marked places are exactly $\alpha_1,\alpha_2, \dots, \alpha_l$. We can assume that $\alpha_1>\alpha_2>\dots>\alpha_l$. When $a_{\alpha_s}$ changes  $e_{j}$
to $e_{i=\delta(j)}$ on the $\alpha_s$-th place, we obtain the sign  $(-1)^{ p_j=p_i+1}$ from the definition of the operator $P$. Note that 
\[
(a_{\alpha_1}\dots a_{\alpha_l})\pi(a_{\alpha_1}\dots a_{\alpha_l})^{-1}
=\varepsilon^l\, (a_{\alpha_1}\dots a_{\alpha_l})\pi(a_{\alpha_l}\dots a_{\alpha_1}).
\]
Thus, the action of the two products of $a_\alpha$'s on the marked places  at the beginning and at the end  gives
$s_2=(-1)^{\sum(p_{i_{\alpha_k}}+p_{j_{\alpha_k}})+l}$.  Applying the action of the product  $(a_{\alpha_1}\dots a_{\alpha_l})$ by ``carrying  the operator $P=\begin{pmatrix}
0&1\\
-1&0
\end{pmatrix}$
through the odd vectors''  we obtain $s_3=(-1)^{\sum p_{j_{k}}} $, where the sum is taken over the $k$-th places such that  to the right from them there is an odd number of the marked places. Similarly,  by the action of the product 
$(a_{\alpha_1}\dots a_{\alpha_l})$ in the end we obtain $s_4=(-1)^{\sum p_{i_{k}} }$, where  the sum is taken over  the same $k$'s as in the previous case. It remains to note that $s_0s_1s_2s_3s_4=(-1)^{\sum (p_{i_{k}} +p_{j_k}) }=1$, where the sum is taken over all $k$ and
$s_0=(-1)^l$ is the sign  of $\varepsilon ^l$. The proof is complete. 
   \end{proof}

  %%%%%%%%%%%


\begin{thebibliography}{99}

\bibitem{AlSaSa}
 A.\,Alldridge, S.\,Sahi, H.\,Salmasian, 
 \emph{Schur Q-functions and the Capelli eigenvalue problem for the Lie superalgebra $q(n)$,}
 in Representation Theory and Harmonic Analysis on Symmetric Spaces, Contemp. Math.,
Vol. 714, Amer. Math. Soc., Providence, RI, (2018), 1-21. %arXiv:1701.03401.

\bibitem{Ber-Reg}
A.\,Berele, A.\,Regev, \emph{Hook young diagrams with applications to combinatorics and to representations of Lie superalgebras},
Adv.  in Math, 64 (1987), No.2, 118-175.


\bibitem{Cheng-Wang}
 S.-J.\,Cheng, W.\,Wang, 
 \emph{Dualities and representations of Lie superalgebras.}
Graduate Studies in Mathematics, 144. American Mathematical Society, Providence, RI, (2012). 

\bibitem{Ker-Olsh}
S.\,Kerov, G.\,Olshanski,
\emph {Polynomial functions on the set of Young diagrams},
  C. R. Acad. Sci. Paris Sér. I Math. 319 (1994), No. 2, 121-126. 
 
 \bibitem{Naz} 
 M.\,Nazarov, 
 \emph{Capelli identities for Lie superalgebras}, Ann. Sci. Ecole Norm. Sup. (4)\,30 (1997), 847-872.
  
 \bibitem{OkOl-1} 
A.\,Okounkov, G.\,Olshanski,  
\emph{Shifted Schur functions},
 St. Petersburg. Math. J. 9, No. 2, (1998) 239-300; translation from Algebra Anal. 9  (1997), No. 2, 73-146.  

  
\bibitem{Olsh-1}
G.\,Olshanski,
\emph {Representations of infinite-dimensional classical groups, limits of enveloping algebras, and Yangians},
 Topics in representation theory, Adv. Soviet Math., 2, Amer. Math. Soc., Providence, RI, 1991, 1–66.

\bibitem{Olsh-2}
G.\,Olshanski, 
\emph{Generalized symmetrization in enveloping algebras},
Transform. Groups 2, No. 2, (1997), 197-213.



\bibitem{SaSa}
S.\,Sahi, H.\,Salmasian,
\emph{The Capelli problem for $\fgl(m|n)$  and the spectrum of invariant differential operators,}
 Adv. Math. 303 (2016), 1-38.
 
\bibitem{SaSaSe}
S.\,Sahi, H.\,Salmasian, V.\,Serganova, 
\emph{The Capelli eigenvalue problem for Lie superalgebras,}
 Math. Z. 294 (2020), No. 1-2, 359-395. 
 

\bibitem{Serg-1}
A.\,Sergeev, \emph{The tensor algebra of the identity representation as a module over the Lie superalgebras $\mathfrak{Gl}(n,m)$ and $Q(n)$},
  Sb. Math., 51:2 (1985), 419-427.







  \end{thebibliography}
\end{document}